\title{Donaldson-Thomas Theory and Resolutions of Toric $A$-Singularities}
\author{Dustin Ross}
\address{Dustin Ross, University of Michigan, Department of Mathematics, Ann Arbor, MI 48109, USA}
\email{dustyr@umich.edu}
\definecolor{lgray}{gray}{0.8}
\definecolor{dgray}{gray}{0.4}
\definecolor{Gold}{rgb}{1.,0.84,0.}
\DeclareRobustCommand{\rchi}{{\mathpalette\irchi\relax}}
\newcommand{\irchi}[2]{\raisebox{\depth}{$#1\chi$}}
\renewcommand{\top}{\mathrm{top}}
\newcommand{\bP}{\mathbb{P}}
\newcommand{\bZ}{\mathbb{Z}}
\newcommand{\cO}{\mathcal{O}}
\newcommand{\bC}{\mathbb{C}}
\newcommand{\fq}{\mathfrak{q}}
\newcommand{\bq}{\mathbf{q}}
\newcommand{\cX}{\mathcal{X}}
\newcommand{\cN}{\mathcal{N}}
\newcommand{\cZ}{\mathcal{Z}}
\newcommand{\bv}{\mathbf{v}}
\newcommand{\cRP}{\mathcal{RP}}
\DeclareRobustCommand*\circled[1]{\tikz[baseline=(char.base)]{
            \node[shape=circle,draw,inner sep=1.5pt] (char) {#1};}}
\newcommand\xaxis{210}
\newcommand\zaxis{-30}
\newcommand\yaxis{90}
\newcommand\topside[4]{
  \fill[fill=#4, draw=black,shift={(\xaxis:#1)},shift={(\yaxis:#2)},
  shift={(\zaxis:#3)}] (0,0) -- (30:1) -- (0,1) --(150:1)--(0,0);
}
\newcommand\leftside[4]{
  \fill[fill=#4, draw=black,shift={(\xaxis:#1)},shift={(\yaxis:#2)},
  shift={(\zaxis:#3)}] (0,0) -- (0,-1) -- (210:1) --(150:1)--(0,0);
}
\newcommand\rightside[4]{
  \fill[fill=#4, draw=black,shift={(\xaxis:#1)},shift={(\yaxis:#2)},
  shift={(\zaxis:#3)}] (0,0) -- (30:1) -- (-30:1) --(0,-1)--(0,0);
}
\newcommand\cube[4]{
  \topside{#1}{#2}{#3}{#4} \leftside{#1}{#2}{#3}{#4} \rightside{#1}{#2}{#3}{#4}
}
\newtheorem{dummy}{}[section]
\newtheorem{lemma}[dummy]{Lemma}
\newtheorem{proposition}[dummy]{Proposition}
\newtheorem{theorem}[dummy]{Theorem}
\newtheorem{conjecture}{Conjecture}
\newtheorem*{observation}{Observation}
\newtheorem*{maintheorem}{Main Theorem}
\theoremstyle{definition}
\newtheorem{definition}[dummy]{Definition}
\newtheorem{example}[dummy]{Example}
\newtheorem{remark}[dummy]{Remark}
\begin{document}


\begin{abstract}
We prove the crepant resolution conjecture for Donaldson-Thomas invariants of toric Calabi-Yau $3$-orbifolds with transverse $A$-singularities.
\end{abstract}

\maketitle

\section{Introduction}

\subsection{Summary of results}

Motivated by ideas in mirror symmetry, Ruan's crepant resolution conjecture roughly states that the Gromov-Witten invariants of a Calabi-Yau orbifold $\cZ$ should be related to those of a crepant resolution $\pi:W\rightarrow\cZ$. Due to the conjectural relationship between Gromov-Witten theory and Donaldson-Thomas theory in dimension $3$, first formulated by Maulik--Nekrasov--Okounkov--Pandharipande \cite{mnop:gwdti}, one might expect that a similar relationship holds for Donaldson-Thomas invariants of $3$-folds. In particular, the Gromov-Witten crepant resolution conjecture has an especially nice formulation when $\cZ$ satisfies the hard-Lefschetz condition, due to Bryan--Graber \cite{bg:crc}, and this led Bryan--Cadman--Young to conjecture a similar formulation of the Donaldson-Thomas crepant resolution conjecture for hard-Lefschetz $3$-orbifolds \cite{bcy:otv}. Explicitly, they made the following conjecture.

\begin{conjecture}[\cite{bcy:otv} Conjecture 1]\label{conj:crc}
If $\pi:W\rightarrow\cZ$ is a crepant resolution of a hard-Lefschetz $3$-orbifold, then there is an explicit change of variables such that
\[
DT_{mr}(\cZ)=\frac{DT(W)}{DT_{exc}(W)}
\]
where $DT_{mr}(-)$ denotes the reduced, multi-regular Donadson-Thomas potential and $DT_{exc}(W)$ is obtained by restricting the Donaldson-Thomas potential of $W$ to curves supported on the exceptional locus of $\pi$ (see Section \ref{sec:dt} for precise definitions).
\end{conjecture}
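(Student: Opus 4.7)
My plan, restricted to the setting of the paper (toric Calabi--Yau $3$-orbifolds with transverse $A$-singularities), is to exploit the orbifold topological vertex formalism of Bryan--Cadman--Young. The starting observation is that torus localization expresses $DT_{mr}(\cZ)$ as a sum over coloured (orbifold) $3$D partitions attached to the toric web of $\cZ$, and this sum factors into a product of vertex terms and edge terms. The corresponding expression for $DT(W)/DT_{exc}(W)$ is the analogous product over the finer toric web of the crepant resolution $W$, with the quotient by $DT_{exc}(W)$ removing precisely the contributions supported on the chain of exceptional $\bP^1$s that replaces each thick orbifold edge. The goal is therefore to match the two factorized expressions, edge by edge and vertex by vertex, under the explicit change of variables predicted by the McKay correspondence (which identifies twisted sectors of $\cZ$ with exceptional curve classes of $W$).

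The first reduction is to separate the problem into local statements. Transverse $A$-singularities are concentrated along $1$-dimensional toric strata, so each neighborhood of a torus fixed point of $\cZ$ already matches a neighborhood of the corresponding fixed point of $W$, and the vertex contributions to the partition function should agree almost tautologically after the change of variables. The content of the conjecture is therefore concentrated on the edges: for each thick edge of the web of $\cZ$ corresponding to a local $\bZ_{n+1}$-gerbe over a $\bP^1$ with prescribed normal bundle $\cO(a)\oplus\cO(-a-2)$, one must prove an edge identity expressing the orbifold edge generating function, with arbitrary asymptotic profiles labeled by characters of $\bZ_{n+1}$, in terms of the edge generating functions along the chain of resolved $\bP^1$s, modulo the chain's internal contribution. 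This is a closed combinatorial identity about sums over $3$D partitions with prescribed asymptotics.

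The edge identity can then be attacked by a direct calculation using the orbifold vertex/edge formulas. Two strategies seem promising: a Fock space / infinite wedge approach, rewriting both sides as vacuum expectations of vertex operator products and reducing the identity to a commutation relation between bosonic operators indexed by characters of $\bZ_{n+1}$ and those indexed by exceptional curves; or a pit-stuffing bijection, matching orbifold partitions with prescribed twisted asymptotics against ordinary partitions with asymptotic shapes on the chain of $n$ exceptional $\bP^1$s. Once the edge identity is proved in sufficient generality to cover all normal-bundle degrees $(a,-a-2)$ and all asymptotic profiles, a multiplicative gluing argument assembles the global statement by running over all edges and vertices of the toric web; this is the step I expect to be formalized by the ``regular gluing'' and ``orbifold gluing'' propositions already referenced in the excerpt.

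The principal obstacle I anticipate is the edge identity itself, since the Calabi--Yau weights and sign conventions depend nontrivially on the splitting $(a,-a-2)$ and on the twisted boundary data, and the asymptotic partition data must be tracked carefully on both sides. A secondary obstacle is pinning down the change of variables: writing it in a form compatible with the edge factorization requires identifying the correct $\bZ$-linear isomorphism from the representation ring of $\bZ_{n+1}$ to the Picard lattice of the exceptional chain, which is the McKay correspondence at the level of variables but must be implemented concretely enough that it glues to the global generating function identity.
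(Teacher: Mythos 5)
Your overall framework --- localize via the Bryan--Cadman--Young orbifold vertex formalism, prove a local combinatorial identity by vertex-operator methods, and glue --- is indeed the strategy of the paper, and both tools you name (Fock space commutation relations, or a bijection on partitions) are genuinely in play. But there is a substantive misconception in where you locate the content. You assert that the vertex contributions of $\cZ$ and $W$ ``agree almost tautologically'' because the orbifold structure sits on one-dimensional strata, so that the whole difficulty lives in an edge identity. This is backwards: a torus-fixed point of $\cZ$ lying on a stacky line has local model $[\bC^3/\bZ_n]$ with weights $(1,-1,0)$, a genuinely singular point of the coarse space, and the resolution replaces this single vertex by $n$ vertices joined by a chain of $n-1$ exceptional $(-2)$-curves. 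Consequently a single orbifold vertex term $P^{n}_{\rho^+,\rho^-,\bar\lambda}$ must be matched against a sum over $n-1$ intermediate partitions of a product of $n$ ordinary vertices weighted by exceptional curve classes, divided by the empty-profile series to implement the quotient by $DT_{exc}(W)$. That vertex correspondence (Theorem \ref{thm:vertexcrc}) is the technical heart of the argument, not a tautology, and your proposal as written never confronts it.

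The missing combinatorial idea is the $n$-quotient correspondence: the balanced $n$-colored partition $\bar\lambda$ on a thick edge must be traded for the $n$-tuple $(\lambda_0,\dots,\lambda_{n-1})$ of ordinary partitions on the $n$ parallel resolved edges by interlacing slope sequences. On the operator side, this is the statement that commuting the $Q_k$ operators (the Novikov variables of the exceptional curves, after $v_i\mapsto q_i$) through the $\Gamma_{\pm}$ strings and reordering by the index of $\fq_t$ reproduces the interlaced operator product defining the orbifold vertex, up to an explicit monomial-and-sign factor; controlling that factor is Proposition \ref{lem:commuteidentity}, proved by induction on border strips. By contrast, the comparison of the bare edge factors $E^{W,e}_\lambda$ and $E^{\cZ,e}_\lambda$, which you identify as the principal obstacle, is routine bookkeeping once the vertex identity and the quotient dictionary are in hand. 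Your plan becomes viable only after replacing ``vertices match tautologically'' with this $n$-quotient vertex identity and proving it.
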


The simplest type of hard-Lefschetz orbifold in dimension $3$ occurs when the orbifold structure is cyclic and supported on isolated curves in $\cZ$, we call these \emph{transverse $A$-singularities}. The main result of this paper is the following. 

\begin{maintheorem}[Theorem \ref{thm:globalcrc}]
Conjecture \ref{conj:crc} is true for toric Calabi-Yau $3$-orbifolds with transverse $A$-singularities.
\end{maintheorem}

We refer the reader to Section \ref{sec:toricdt} for an explicit description of the change of variables. We mention that a comparison formula very similar in nature to Conjecture \ref{conj:crc} has previously been proved by Calabrese \cite{c:crcdti}, for general hard-Lefschetz $3$-orbifolds. However, at this time there is still a gap in recovering Conjecture \ref{conj:crc} explicitly from Calabrese's formula. Yet another approach to this problem has been studied by Bryan--Steinberg \cite{bs:ccifcc}.
 
The objects of interest in this paper are inherently algebro-geometric. However, the methods we employ are combinatorial in nature. More specifically, Bryan--Cadman--Young \cite{bcy:otv}, generalizing Okounkov--Reshetikhin--Vafa \cite{orv:qcycc} and Maulik--Nekrasov--Okounkov--Pandharipande \cite{mnop:gwdti}, showed that the entire Donaldson-Thomas theory of toric Calabi-Yau $3$-orbifolds can be recovered from a basic building block, the \emph{orbifold topological vertex}. The orbifold topological vertex is a generating function of colored 3D partitions associated to each torus fixed point in $\cZ$  and the Donaldson-Thomas potential can be recovered from the orbifold vertex via an explicit \emph{gluing algorithm}. 

In order to prove Theorem \ref{thm:globalcrc}, we proceed in two steps. We first formulate and prove a local correspondence on the level of the orbifold topological vertex (Theorem \ref{thm:vertexcrc}). Using the vertex operator expression for the orbifold topological vertex, developed by Bryan-Cadman-Young \cite{bcy:otv}, the local correspondence is proved by commuting vertex operators and careful book-keeping of the resulting commutation relations. The second step, carried out in Section \ref{sec:globalcrc}, is to prove that the vertex CRC is compatible with the edge terms in the gluing algorithm.

\subsection{Relation to other work}

There is a conjectural diagram of equivalences for hard-Lefschetz $3$-orbifolds:
\[
\begin{CD}
GW(W) @= DT(W)\\
@| @|\\
GW(\cZ) @= DT(\cZ)
\end{CD}
\]
where the horizontal equivalences are the Gromov-Witten/Donaldson-Thomas correspondence and the vertical equivalences are the crepant resolution conjecture, all of which conjecturally consist of a formal change of variables in the generating series along with analytic continuation of the parameters. In the case of toric targets with transverse $A$-singularities, the top equivalence is a theorem of Maulik--Oblomkov--Okounkov--Pandharipande \cite{moop:gwdtc} (more generally they proved it for all toric $3$-folds) and the equality on the right is the main theorem of this paper. The bottom conjectural equivalence was made explicit in \cite{rz:ggmv,rz:chialsf} and proved in the case where the toric orbifold is a local surface. 

In \cite{bcr:craos}, Brini--Cavalieri--Ross proved an all-genus crepant resolution conjecture for the open Gromov-Witten theory of the $A_{n-1}$ vertex, which provides a local version of the equality on the left. In \cite{r:gwdtcrc}, we prove that the main result of \cite{bcr:craos} along with the results of this paper and the correspondence of gluing algorithms of \cite{rz:chialsf} are sufficient to deduce the bottom equality and, hence, allow us to ``complete the square'' for $\cZ$ a toric Calabi-Yau $3$-orbifold with transverse $A$-singularities. In particular, the bottom equivalence provides strong structural results about $GW(\cZ)$ that are not obvious from a purely Gromov-Witten perspective.

\subsection{Plan of the paper}

In Section \ref{sec:dt}, we review the basic definitions of Donaldson-Thomas theory in order to make precise the objects which play a role in Conjecture \ref{conj:crc}. We pay particularly close attention to the case of toric targets with transverse $A$-singularities in Section \ref{sec:toricdt}, where we develop an explicit statement of the crepant resolution conjecture. In Section \ref{sec:orbvertex}, we review the orbifold topological vertex of Bryan--Cadman--Young and we recall the gluing algorithm in Section \ref{sec:gluing}. In Section \ref{sec:vertexcrc}, we state the local correspondence for the orbifold topological vertex and we prove is by a careful manipulation of vertex operators. In Section \ref{sec:globalcrc}, we show that the local correspondence is compatible with the gluing algorithm and deduce the Donaldson-Thomas crepant resolution conjecture for all toric Calabi-Yau $3$-orbifolds with transverse $A$-singularities.

\subsection{Acknowledgements} The author is greatly indebted to Jim Bryan and Ben Young for helpful conversation and encouragement. He is also grateful to Renzo Cavalieri for carefully listening to the main arguments appearing in this paper and providing helpful feedback. The author has been supported by NSF RTG grants DMS-0943832 and DMS-1045119 and the NSF postdoctoral research fellowship DMS-1401873.

\section{Donaldson-Thomas theory}\label{sec:dt}

In this section, we review the basic definitions of Donaldson-Thomas (DT) theory for Calabi-Yau (CY) $3$-orbifolds.

\subsection{General theory}\label{sec:definitions}

Let $\cZ$ be a CY $3$-orbifold, ie. a smooth, quasi-projective Deligne-Mumford stack of dimension three over $\bC$ with generically trivial isotropy and trivial canonical bundle. Let $Z$ denote the coarse moduli space of $\cZ$.

Let $F_1K(\cZ)$ denote the compactly supported elements of $K$-theory, up to numerical equivalence, supported in dimension at most one. Then for any $\gamma\in F_1K(\cZ)$, the corresponding DT invariant is defined as a weighted Euler characteristic
\begin{equation}\label{eqn:dtdef}
DT_\gamma(\cZ):=\sum_{k\in\bZ}k\rchi_{\top}(\nu^{-1}(k))
\end{equation}
where $\nu:\text{Hilb}_\gamma(\cZ)\rightarrow\bZ$ is Behrend's constructible function \cite{b:dttivmg} associated to the Hilbert scheme of substacks $V\subset\cZ$ with $[\cO_V]=\gamma$, and $\rchi_{\top}(-)$ is the topological Euler characteristic. Define the \emph{DT potential} as the formal series
\[
\widehat{DT}(\cZ):=\sum_{\gamma\in F_1K(\cZ)}DT_\gamma(\cZ)q^\gamma
\]
with formal parameter $q$ defined so that $q^{\gamma_1}q^{\gamma_2}:=q^{\gamma_1+\gamma_1}$.

For the purposes of Conjecture \ref{conj:crc}, we work with a certain specialization of $\widehat{DT}(\cZ)$. In order to define this specialization, we consider two important subgroups of $F_1K(\cZ)$. The \emph{multi-regular $K$-group} $F_{mr}K(\cZ)\subseteq F_1K(\cZ)$ is defined to consist of classes represented by sheaves such that at a general point of each curve in the support, the associated representation of the stabilizer is a multiple of the regular representation. The \emph{zero-dimensional $K$-group} $F_0K(\cZ)\subseteq F_1K(\cZ)$ is defined to consists of classes represented by sheaves with zero-dimensional supports. We have specialized series
\[
\widehat{DT}_{mr}(\cZ):=\sum_{\gamma\in F_{mr}K(\cZ)}DT_\gamma(\cZ)q^\gamma \hspace{.5cm}\text{and}\hspace{.5cm} DT_{0}(\cZ):=\sum_{\gamma\in F_{0}K(\cZ)}DT_\gamma(\cZ)q^\gamma
\]
and we define the \emph{reduced, multi-regular DT potential} by
\[
DT_{mr}(\cZ):=\frac{\widehat{DT}_{mr}(\cZ)}{DT_0(\cZ)}.
\]

Assume $\cZ$ satisfies the \emph{hard-Lefschetz condition} (cf. \cite{bg:crc}) and let $\pi:W\rightarrow Z$ be a crepant resolution by a smooth variety $W$.  Define $F_{exc}K(W)\subseteq F_1K(W)$ to consist of classes represented by sheaves supported on curves in the exceptional locus of $\pi$. We define
\[
DT_{exc}(W):=\sum_{\gamma\in F_{exc}K(W)}DT_\gamma(W)q^\gamma.
\] 

These definitions make precise the objects in the statement of Conjecture \ref{conj:crc}. Henceforth, we focus on the particular class of orbifolds of interest to us.

\subsection{Toric targets with transverse $A$-singularities}\label{sec:toricdt}

In this section we describe some of the basic geometry of toric CY $3$-orbifolds with transverse $A$-singularities. This description allows us to choose a basis for the relevant $K$-groups so we can explicitly describe the change of variables in the crepant resolution conjecture.

\subsubsection{Global geometry}

Let $\cZ$ be a toric CY $3$-orbifold with transverse $A$-singularities (ie. $\cZ$ has cyclic isotropy supported on disjoint torus-fixed lines), and let $W$ be its toric resolution (described more explicitly below).  Then to $\cZ$ (and $W$) we can associate a \emph{web diagram}, a trivalent planar graph 
\[
\Gamma_\cZ=\{\text{Edges: } E_\cZ, \text{Vertices: } V_\cZ\}
\]
where vertices correspond to torus fixed points in $\cZ$, edges correspond to torus invariant lines, and regions delineated by edges correspond to torus invariant divisors. The web diagram is essentually dual to the toric fan of $\cZ$. Additionally, we choose an orientation for each edge of $\Gamma_\cZ$.  Let $n_e$ denote the order of the isotropy on the line $L_e$ corresponding to an edge $e$.  We label the edges adjacent to each vertex $(e_1(v),e_2(v),e_3(v))$ requiring that
\begin{itemize}
\item if $v$ is adjacent to an edge $e$ with $n_e>1$, then $e_3(v)=e$, and
\item the labels $(e_1(v),e_2(v),e_3(v))$ are ordered counterclockwise.
\end{itemize}

In order to formulate the change of variables in the crepant resolution conjecture, we must define a few additional factors at each edge. The normal bundle splits $\cN_{L_e/\cZ}\cong\cN_r\oplus\cN_l$ where $\cN_r$ ($\cN_l$) corresponds to the normal bundle summand in the direction of the torus invariant divisor to the right (left) of $e$.  Let $p$ be a general point on $L_e$ and $p_0,p_\infty$ the torus-fixed points with corresponding vertices $v_0,v_\infty$ the initial and terminal vertices of $e$.  Because we are restricting to CY transverse $A$-singularities, a local neighborhood of the torus-fixed points can be expressed as a global quotient $\left[\bC^3/\bZ_n\right]$ where a generator $\xi_n$ of the cyclic group acts on the coordinates with weights $(1,-1,0)$. This allows us to write
\[
\cN_l=\cO(m[p]-\delta_0[p_0]-\delta_\infty[p_\infty])
\]
\[
\cN_r=\cO(m'[p]-\delta_0'[p_0]-\delta_\infty'[p_\infty])
\]
where $\delta_0=1$ if the edge corresponding to the fiber of $\cN_r$ over $p_0$ is labelled $e_3(v_0)$, and $\delta_0=0$ otherwise. Similarly, $\delta_0'=1$ if the edge corresponding to the fiber of $\cN_l$ over $p_0$ is labelled $e_3(v_0)$, and $\delta_0=0$ otherwise. Defining $\delta_\infty$ and $\delta_{\infty}'$ similarly, we have $\delta_\bullet+\delta_\bullet'\in\{0,1\}$ and the CY condition is equivalent to
\[
m+m'-(\delta_0+\delta_0'+\delta_\infty+\delta_\infty')=-2
\]

\begin{remark}
The $\delta$ factors defined here are different than those in \cite{bcy:otv} when $n=1$. We define them as such to eliminate the need for the factors in \cite{bcy:otv} involving $A_\lambda$.
\end{remark}

For each edge $e$, we define a set of formal variables
\[
\bq_e:=(q_{e,0},q_{e,1},\dots,q_{e,n_e-1})
\]
Geometrically, the $q_{e,i}$ index skyscraper sheaves supported on $e$ with $\bZ_{n_e}$ acting by the $i$th irreducible representation. Notice that the product
\[
q_{e,0}q_{e,1}\cdots q_{e,n_e-1}
\]
indexes the skyscraper sheaf with regular representation, which can be deformed away from $e$. Therefore, if we introduce a new variable $q$ indexing the skyscraper sheaf on a smooth point, we have the relations
\[
q:=q_{e,0}q_{e,1}\cdots q_{e,n_e-1},
\] 
for all $e$.

We also consider ``Novikov'' variables $v_e$ associated to each edge. The $v_e$ satisfy natural curve class relations coming from the geometry of $\cZ$. $DT(\cZ)$ is a formal series in the variables $\{v_e, q_{e,k}\}$ where $e\in E_\cZ$ runs over the set of edges in the web diagram and $0\leq k \leq n_e-1$. 

Now let $\pi:W\rightarrow \cZ$ be the toric resolution of $\cZ$. Then $\pi$ is an isomorphism away from the lines $L_e$ with $n_e>1$. For each edge $e\in E_\cZ$, there are $n_e$ corresponding edges in the web diagram for $W$ (see Figure \ref{fig:webdiagrams} and the corresponding caption). The orientation on $e$, which was used to compute $m_e$ above, induces an orientation on the $n_e$ corresponding edges in $E_W$ and we label them $f_{e,0},\dots,f_{e,n-1}$ from right to left. We label the edge connecting the initial point of $f_{e,k}$ to the initial point of $f_{e,k+1}$ by $g_{e,k+1}$ and we label the edge connecting the terminal point of $f_{e,k}$ to the terminal point of $f_{e,k+1}$ by $h_{e,k+1}$. Let $u_f$, $u_g$, $u_h$ be formal variables corresponding to these edges so that $DT(W)$ is a formal series in $\{u_f,u_g,u_h,q\}$. 

\begin{figure}
\begin{tikzpicture}
\begin{scope}[scale=1,xshift=6.5cm]
\draw (0,0) -- (1, .5) -- (2,1.5) -- (2,2.5) -- (1,3.5) -- (0,4);
\draw (1,.5) -- (5,.5);
\draw (2,1.5) -- (4,1.5);
\draw (2,2.5) -- (4,2.5);
\draw (1,3.5) -- (5,3.5);
\draw (6,0) -- (5,.5) -- (4,1.5) -- (4,2.5) -- (5,3.5) -- (6,4);
\end{scope}
\begin{scope}[scale=1, xshift=6.5cm]
\draw (3, .75)  node{$f_{e,0}$};
\draw (3,1.75) node{$f_{e,1}$};
\draw (3,2.75) node{$f_{e,2}$};
\draw (3,3.75) node{$f_{e,3}$};
\draw (1,1.1) node{$g_{e,1}$};
\draw (1.5,2) node{$g_{e,2}$};
\draw (1,2.9) node{$g_{e,3}$};
\draw (5,1.1) node{$h_{e,1}$};
\draw (4.5,2) node{$h_{e,2}$};
\draw (5,2.9) node{$h_{e,3}$};
\end{scope}
\begin{scope}[scale=1,xshift=.5cm]
\draw (1,1) -- (2,2) -- (1,3);
\draw (6,1) -- (5,2) -- (6,3);
\draw (3.5,2.25) node{$e$};
\end{scope}
\begin{scope}[scale=1, very thick ,xshift=.5cm]
\draw (2,2) -- (5,2);
\end{scope}
\end{tikzpicture}
\caption{The labeling of the web diagrams for orbifold (left) and resolution (right) near an edge $e$ with $n_e=4$ and all horizontal edges oriented rightward. The toric surfaces corresponding to the parallelograms are Hirzebruch surfaces $H_{4m_e+6}$,  $H_{4m_e+4}$, and $H_{4m_e+2}$ (from bottom to top) where $H_k:= \bP(\cO_{\bP^1}\oplus\cO_{\bP^1}(k))$. With this labeling, the bottom edge of each paralellogram corresponds to the zero section of the corresponding Hirzebruch surface.}\label{fig:webdiagrams}
\end{figure}
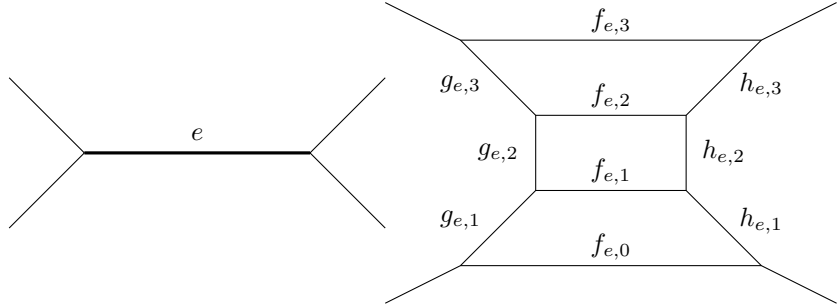

There are relations between the variables $u_f$, $u_g$, $u_h$ coming from the geometry of $W$. In particular, the edges $f_{e,k}$, $f_{e,k-1}$, $g_{e,k}$, and $h_{e,k}$ correspond to the toric boundary of the Hirzebruch surface 
\[
H_{n_em_e+2(n_e-k)}:=\bP(\cO_{\bP^1}\oplus\cO_{\bP^1}(n_em_e+2(n_e-k))),
\]
and thus satisfy the relations
\[
u_{g_{e,k}}=u_{h_{e,k}}
\]
and
\[
u_{f_{e,k}}=u_{f_{e,k-1}}u_{g_{e,k}}^{2k-2n_e-m_en_e}=u_{f_{e,0}}\prod_{l=1}^k u_{g_{e,l}}^{2l-2n_e-m_en_e}
\]
Henceforth, we interpret $u_{f_{e,k}}$ as a function of $u_{f_{e,0}}, u_{g_{e,1}},\dots u_{g_{e,k}}$. With this notation, $DT(W)_{exc}$ is the formal series obtained from $DT(W)$ by setting $u_{f_{e,0}}=0$ for all $e\in E_\cZ$:
\[
DT_{exc}(W):=DT(W)|_{u_f=0}
\]

\subsubsection{The crepant resolution conjecture}

Our main theorem is the following correspondence.

\begin{theorem}\label{thm:globalcrc}
With notation as above,
\[
DT_{mr}(\cZ)=\frac{DT(W)}{DT_{exc}(W)}
\]
after the change of variables $u_{g_{e,i}},u_{h_{e,i}}\rightarrow q_{e,i}$, $q\rightarrow q$, and
\[
u_{f_{e,0}}\rightarrow v_e\prod_{l=1}^{n_e-1}q_{e,l}^{(m_e+2)(n_e-l)}
\]

\end{theorem}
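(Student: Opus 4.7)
The plan is to leverage the factorization of both $DT(\cZ)$ and $DT(W)$ through the (orbifold) topological vertex formalism of Bryan--Cadman--Young reviewed in Sections \ref{sec:orbvertex}--\ref{sec:gluing}. Since the resolution $\pi\colon W\to\cZ$ replaces each orbifold edge $e$ of $\Gamma_\cZ$ by a chain of $n_e$ parallel edges $f_{e,0},\dots,f_{e,n_e-1}$ connected by the exceptional edges $g_{e,k},h_{e,k}$ bounding Hirzebruch-surface parallelograms $H_{n_em_e+2(n_e-k)}$, I would first prove a purely local correspondence at each torus-fixed vertex of $\cZ$ (Theorem \ref{thm:vertexcrc}), and then show that this local correspondence is compatible with the edge terms of the gluing algorithm.

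For the local step, I would work directly with the vertex-operator expression for the orbifold vertex on the colored Fock space. The orbifold vertex at a fixed point with isotropy $\bZ_n$ is a trace of a specific product of vertex operators. On the resolution side, the same fixed point has been subdivided into a length-$n$ chain of smooth vertices joined by Hirzebruch edges, whose combined contribution is likewise a trace of a longer product of smooth vertex operators. The strategy is to commute the smooth operators across each other so as to reassemble them into the orbifold operator ordering; each commutator $[\Gamma_+,\Gamma_-]$ contributes a rational factor that I would identify, after the stated change of variables, with exactly the exceptional edge contribution $DT_{exc}(W)$ along the chain and with the normalization $DT_0(\cZ)$. Dividing out by these normalizations yields the vertex CRC.

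For the global step, the gluing algorithm is multiplicative over the web diagram, so once the vertex correspondence holds at every torus-fixed point, it suffices to verify that the orbifold edge factor at each $e\in E_\cZ$ equals the product of the smooth edge factors along $f_{e,0},\dots,f_{e,n_e-1}$, after substituting
\[
u_{f_{e,0}}\to v_e\prod_{l=1}^{n_e-1} q_{e,l}^{(m_e+2)(n_e-l)}, \qquad u_{g_{e,i}},u_{h_{e,i}}\to q_{e,i}.
\]
The edge factors are monomial weights determined by $m_e$, $m_e'$ and the $\delta$-parameters of Section \ref{sec:toricdt}, and the Hirzebruch degrees $n_em_e+2(n_e-k)$ are exactly what force the exponent $(m_e+2)(n_e-l)$ in the substitution; telescoping the product over $k=1,\dots,n_e$ should then give the claimed identity essentially by direct computation.

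The main obstacle is the vertex-operator commutation in the local step. The colored vertex operators appearing in the orbifold vertex satisfy commutation relations that mix the coloring indices, and one must carefully isolate which commutators reconstruct the Hirzebruch-surface edge contributions (with the precise exponents of $q_{e,i}$) and which must be absorbed into the zero-dimensional normalization $DT_0(\cZ)$. Getting this accounting right, so that the remaining product is \emph{exactly} the orbifold vertex divided by $DT_0$ with no stray factors, is the crux of the argument; by comparison, the global edge matching is largely bookkeeping once the exponents in the change of variables have been identified.
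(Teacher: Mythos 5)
Your plan follows the paper's proof essentially verbatim: a local vertex CRC (Theorem \ref{thm:vertexcrc}) proved by commuting the smooth vertex operators and the $Q_k$ insertions into the interlaced ordering dictated by the $n$-quotient of $\bar\lambda$, followed by a check that the resulting factors combine with the Hirzebruch edge weights to reproduce the orbifold edge term under the stated substitution. The one small imprecision is that the orbifold edge factor is not the bare product of the smooth edge factors along $f_{e,0},\dots,f_{e,n_e-1}$; it is that product times the residual monomial prefactors left over from the vertex correspondence at the two endpoints of $e$ (equations \eqref{eqn:vertex1}--\eqref{eqn:vertex2} in the paper) — but you correctly flag exactly this bookkeeping as the crux.
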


\begin{remark}
The change of variables given in Theorem \ref{thm:globalcrc} seems to depend, a priori, on the choice of orientation of each $e$. However, it is easy to check the independence of this choice.
\end{remark}

\subsection{The orbifold topological vertex}\label{sec:orbvertex}

In the particular case of toric targets, the DT potential has a beautiful combinatorial description, first developed by Okounkov--Reshitikhin--Vafa \cite{orv:qcycc} in the smooth case and later generalized by Bryan--Cadman--Young \cite{bcy:otv} to the orbifold setting.

Intuitively, the combinatorial description of the DT generating series follows from the simple fact that the Euler characteristic of the Hilbert scheme is equal to the sum to its torus-fixed points. A torus-fixed point of the Hilbert scheme is completely determined by its behavior at each torus-fixed point of $\cZ$, where it is locally defined by a monomial ideal with one-dimensional support. Such monomial ideals, in turn, correspond to asymptotic 3D partitions, as depicted in Figure \ref{monideal}. Therefore, the study of DT invariants can intuitively be reduced to the study of 3D partitions along with a suitable gluing algorithm for patching the corresponding subschemes together.

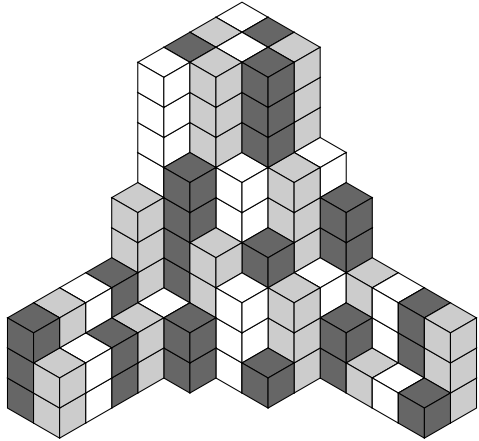
\begin{figure}
\begin{tikzpicture}[scale=.4]

\cube{0}{0}{0}{white}
\cube{0}{1}{0}{white}
\cube{0}{2}{0}{white}
\cube{0}{3}{0}{white}
\cube{0}{4}{0}{white}
\cube{0}{5}{0}{white}
\cube{0}{6}{0}{white}
\cube{0}{7}{0}{white}
\cube{0}{8}{0}{white}

\cube{0}{0}{1}{dgray}
\cube{0}{1}{1}{dgray}
\cube{0}{2}{1}{dgray}
\cube{0}{3}{1}{dgray}
\cube{0}{4}{1}{dgray}
\cube{0}{5}{1}{dgray}
\cube{0}{6}{1}{dgray}
\cube{0}{7}{1}{dgray}
\cube{0}{8}{1}{dgray}

\cube{0}{0}{2}{lgray}
\cube{0}{1}{2}{lgray}
\cube{0}{2}{2}{lgray}
\cube{0}{3}{2}{lgray}
\cube{0}{4}{2}{lgray}
\cube{0}{5}{2}{lgray}
\cube{0}{6}{2}{lgray}
\cube{0}{7}{2}{lgray}
\cube{0}{8}{2}{lgray}

\cube{0}{0}{3}{white}
\cube{0}{1}{3}{white}
\cube{0}{2}{3}{white}
\cube{0}{3}{3}{white}
\cube{0}{4}{3}{white}
\cube{0}{5}{3}{white}

\cube{1}{0}{0}{lgray}
\cube{1}{1}{0}{lgray}
\cube{1}{2}{0}{lgray}
\cube{1}{3}{0}{lgray}
\cube{1}{4}{0}{lgray}
\cube{1}{5}{0}{lgray}
\cube{1}{6}{0}{lgray}
\cube{1}{7}{0}{lgray}
\cube{1}{8}{0}{lgray}

\cube{2}{0}{0}{dgray}
\cube{2}{1}{0}{dgray}
\cube{2}{2}{0}{dgray}
\cube{2}{3}{0}{dgray}
\cube{2}{4}{0}{dgray}
\cube{2}{5}{0}{dgray}
\cube{2}{6}{0}{dgray}
\cube{2}{7}{0}{dgray}
\cube{2}{8}{0}{dgray}

\cube{3}{0}{0}{white}
\cube{3}{1}{0}{white}
\cube{3}{2}{0}{white}
\cube{3}{3}{0}{white}
\cube{3}{4}{0}{white}
\cube{3}{5}{0}{white}
\cube{3}{6}{0}{white}
\cube{3}{7}{0}{white}
\cube{3}{8}{0}{white}

\cube{1}{0}{1}{white}
\cube{1}{1}{1}{white}
\cube{1}{2}{1}{white}
\cube{1}{3}{1}{white}
\cube{1}{4}{1}{white}
\cube{1}{5}{1}{white}
\cube{1}{6}{1}{white}
\cube{1}{7}{1}{white}
\cube{1}{8}{1}{white}

\cube{2}{0}{1}{lgray}
\cube{2}{1}{1}{lgray}
\cube{2}{2}{1}{lgray}
\cube{2}{3}{1}{lgray}
\cube{2}{4}{1}{lgray}
\cube{2}{5}{1}{lgray}
\cube{2}{6}{1}{lgray}
\cube{2}{7}{1}{lgray}
\cube{2}{8}{1}{lgray}

\cube{1}{0}{2}{dgray}
\cube{1}{1}{2}{dgray}
\cube{1}{2}{2}{dgray}
\cube{1}{3}{2}{dgray}
\cube{1}{4}{2}{dgray}
\cube{1}{5}{2}{dgray}

\cube{2}{0}{2}{white}
\cube{2}{1}{2}{white}
\cube{2}{2}{2}{white}
\cube{2}{3}{2}{white}
\cube{2}{4}{2}{white}
\cube{2}{5}{2}{white}

\cube{0}{0}{4}{dgray}
\cube{0}{1}{4}{dgray}
\cube{0}{2}{4}{dgray}
\cube{0}{3}{4}{dgray}
\cube{0}{4}{4}{dgray}

\cube{1}{0}{3}{lgray}
\cube{1}{1}{3}{lgray}
\cube{1}{2}{3}{lgray}
\cube{1}{3}{3}{lgray}
\cube{1}{4}{3}{lgray}
\cube{1}{5}{3}{lgray}

\cube{1}{6}{2}{dgray}
\cube{1}{7}{2}{dgray}
\cube{1}{8}{2}{dgray}

\cube{1}{0}{4}{white}
\cube{1}{1}{4}{white}
\cube{1}{2}{4}{white}

\cube{2}{0}{3}{dgray}
\cube{2}{1}{3}{dgray}
\cube{2}{2}{3}{dgray}
\cube{2}{3}{3}{dgray}

\cube{3}{0}{1}{dgray}
\cube{3}{1}{1}{dgray}
\cube{3}{2}{1}{dgray}
\cube{3}{3}{1}{dgray}
\cube{3}{4}{1}{dgray}
\cube{3}{5}{1}{dgray}

\cube{3}{0}{2}{lgray}
\cube{3}{1}{2}{lgray}
\cube{3}{2}{2}{lgray}
\cube{3}{3}{2}{lgray}

\cube{4}{0}{0}{lgray}
\cube{4}{1}{0}{lgray}
\cube{4}{2}{0}{lgray}
\cube{4}{3}{0}{lgray}
\cube{4}{4}{0}{lgray}

\cube{0}{0}{5}{lgray}
\cube{0}{1}{5}{lgray}
\cube{0}{2}{5}{lgray}

\cube{0}{0}{6}{white}

\cube{0}{0}{7}{dgray}

\cube{1}{0}{5}{dgray}
\cube{1}{1}{5}{dgray}

\cube{1}{0}{6}{lgray}

\cube{2}{0}{4}{lgray}
\cube{2}{1}{4}{lgray}
\cube{2}{2}{4}{lgray}

\cube{3}{0}{3}{white}
\cube{3}{1}{3}{white}
\cube{3}{2}{3}{white}

\cube{4}{0}{1}{white}
\cube{4}{1}{1}{white}

\cube{4}{0}{2}{dgray}
\cube{4}{1}{2}{dgray}

\cube{5}{0}{0}{dgray}
\cube{5}{1}{0}{dgray}
\cube{5}{2}{0}{dgray}

\cube{3}{0}{4}{dgray}

\cube{6}{0}{0}{white}
\cube{6}{1}{0}{white}

\cube{7}{0}{0}{lgray}

\cube{5}{0}{1}{lgray}

\cube{0}{1}{6}{white}
\cube{0}{2}{6}{white}
\cube{0}{1}{7}{dgray}
\cube{0}{2}{7}{dgray}
\cube{1}{0}{7}{white}
\cube{0}{0}{8}{lgray}
\cube{0}{1}{8}{lgray}
\cube{0}{2}{8}{lgray}
\cube{1}{0}{8}{dgray}

\cube{6}{0}{1}{dgray}
\cube{5}{1}{1}{lgray}
\cube{6}{1}{1}{dgray}
\cube{7}{1}{0}{lgray}
\cube{7}{0}{1}{white}
\cube{7}{1}{1}{white}
\cube{8}{0}{0}{dgray}
\cube{8}{1}{0}{dgray}
\cube{8}{0}{1}{lgray}
\cube{8}{1}{1}{lgray}

\cube{7}{3}{1}{white}
\cube{8}{3}{1}{lgray}
\cube{9}{3}{1}{dgray}


\end{tikzpicture}
\caption{This image depicts an asymptotic 3D partition $\Pi$. The three legs continue to infinity and are described by a triple of 2-d partitions. There is a corresponding monomial ideal in $\bC[x,y,z]$ generated by all monomials $x^iy^jz^k$ such that the box in the position $(i,j,k)$ does not belong to $\Pi$ (the indices, here, correspond to the back corner of the box). The boxes in $\Pi$ have been colored to reflect the $\bZ_3$ action on $\bC^3$ with weights $(1,-1,0)$, i.e. the $A_2$ singularity.}\label{monideal}
\end{figure}

This intuitive description for computing Euler characteristics is not quite right, since actual DT invariants are defined in terms of \emph{weighted} Euler characteristics \eqref{eqn:dtdef}. However, Bryan--Cadman--Young proved that incorporating the Behrend function merely amounts to a sign, which can easily be incorporated into the generating series. Interestingly, the results of the current paper hold true with or without the sign.

Rather than define the $A_{n-1}$ orbifold topological vertex explicitly in terms of generating functions of colored 3D partitions, we define it directly in terms of vertex operators, following Section 7 of \cite{bcy:otv}, as this will be the most useful perspective for the purposes of this paper. For the perspective of 3D partitions, we direct the reader to Section 3 of \cite{bcy:otv}. Before we can get to the definition, we must introduce some more notation.

\subsubsection{Partitions}
Let $\rho$ denote a partition, i.e. a non-increasing sequence of non-negative integers. We think of $\rho$ as a Young diagram in English notation where we index the rows and columns beginning with $0$. As an example of our conventions, the partition $\rho=(3,3,2,1,0,\dots)$ corresponds to the Young diagram
\begin{center}
\begin{ytableau}
*(white) & *(white) & *(white)\\
*(white) & *(white) & *(gray)\\
*(white) & *(white)\\
*(white)
\end{ytableau}
\end{center}
where the index of the shaded box is $(i,j)=(1,2)$. We define the \emph{size} of $\rho$, denoted $|\rho|$, to be the number of boxes in the corresponding Young diagram. We also define the $l$-th \emph{diagonal} of $\rho$ to be the set of boxes satisfying $j-i=l$. The conjugate partition $\rho'$ is obtained by reflecting the Young diagram along the $0$-th diagonal. 

Let $\bar\lambda$ be an $n$-colored Young diagram where the boxes in the $l$-th diagonal are colored by $(l$ mod $n)\in \{0,\dots,n-1\}$. For example, if $n=3$ we color the partition $\bar\lambda=(3,3,2,1,0,\dots)$ as follows:
\[
\begin{ytableau}
*(white) & *(lgray) & *(dgray) \\
*(dgray) & *(white) & *(lgray) \\
*(lgray)  & *(dgray) \\
*(white)
\end{ytableau}
\]

An \emph{$n$-strip} of $\rho$ is a connected collection of $n$ boxes in the Young diagram that does not contain any $2\times 2$ squares, and a \text{border strip} is such a collection that lies entirely along the southeast border of the Young diagram. We define the \emph{height}, $ht(\nu)$, of a border strip $\nu$ to be the number of rows occupied by the strip, minus one.

In this paper, we only consider colored Young diagrams $\bar\lambda$ that are \emph{balanced} in the sense that they have the same number of boxes of each color. This restriction corresponds to the multi-regular specialization of the DT invariants. Each such $\bar\lambda$ can be decomposed (non-uniquely) by successively pulling off a sequence of $n$-border strips $(\nu_1,...\nu_{|\bar\lambda|/n})$. We define the quantity
\[
\frac{\chi_{\bar\lambda}(n^d)}{\dim(\lambda)}:=(-1)^{\sum_i ht(\nu_i)}=\pm 1
\]
where the notation on the left-hand side originates from an interpretation  in terms of the representation theory of the generalized symmetric group (see, for example, \cite{rz:ggmv} Section 6). It is easily checked that this quantity is independent of the choice of border strip decomposition.

\begin{remark}\label{rmk:young}
We warn the reader that there are two standard conventions concerning the representation of partitions as Young diagrams: English and French notation. We use English notation here. As Macdonald points out in his classic reference, those preferring French notation can read the Young diagrams ``upside down and in a mirror'' \cite{m:sfhp}. English notation is standard in the study of Schur functions, while French notation is more natural for vertex operators.
\end{remark}

\subsubsection{Vertex operators}

One of the key combinatorial tools in studying 3D partitions are the so-called \emph{vertex operators}. Following Section 7 of \cite{bcy:otv}, we give a concise review of the vertex operators used in the description of the $A_{n-1}$ orbifold topological vertex. 

Let $\mathcal{P}$ be the set of all partitions, $\mathcal{R}$ the space of formal Laurent series in formal variables $q_0,\dots,q_{n-1}$, and $\cRP$ the free $\mathcal{R}$-module generated over $\mathcal{P}$. Vertex operators are defined to act on the space $\cRP$. For two partitions $\tau$ and $\sigma$, we write $\tau\succ\sigma$ if
\[
\tau_0\geq \sigma_0\geq \tau_1\geq \sigma_1\geq\dots
\]
Geometrically, $\tau\succ\sigma$ if and only if $\tau'\supseteq\sigma'$ occur as consecutive diagonal slices in a 3D partition (the appearance of the conjugates here is a result of Remark \ref{rmk:young}).

For $x$ a monomial in $q_i$, define the vertex operators $\Gamma_{\pm 1}$ and $Q_k$ by their actions:
\begin{align*}
\Gamma_{+1}(x)\tau&:=\sum_{\sigma\prec\tau}x^{|\tau|-|\sigma|}\sigma\\
\Gamma_{-1}(x)\tau&:=\sum_{\sigma\succ\tau}x^{|\sigma|-|\tau|}\sigma\\
Q_k\tau&:=q_k^{|\tau|}\tau
\end{align*}
For $O$ an operator on $\cRP$, we define the \emph{expectation} $\langle\sigma|O|\tau\rangle$ to be the coefficient of $\sigma$ after applying the operator $O$ to $\tau$.

We will need the following important commutation relations.
\begin{lemma}
For $i,j=\pm 1$:
\begin{equation}\label{eqn:commutation}
\Gamma_i(a)\Gamma_j(b)=(1-ab)^{\frac{j-i}{2}}\Gamma_j(b)\Gamma_i(a)
\end{equation}
and
\begin{equation}\label{eqn:commutation2}
\Gamma_j(a)Q_k=Q_k\Gamma_j(aq_k^{j}).
\end{equation}
\end{lemma}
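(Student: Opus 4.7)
The plan is to handle equation (\ref{eqn:commutation2}) by direct substitution and equation (\ref{eqn:commutation}) by reducing to the classical boson commutation relations. For (\ref{eqn:commutation2}), I would act both sides on an arbitrary partition $\tau$. In the case $j=+1$, the right side becomes $Q_k\sum_{\sigma\prec\tau}(aq_k)^{|\tau|-|\sigma|}\sigma = \sum_{\sigma\prec\tau}a^{|\tau|-|\sigma|}q_k^{(|\tau|-|\sigma|)+|\sigma|}\sigma$, whose $q_k$-exponent telescopes to $|\tau|$ and matches the left side $q_k^{|\tau|}\sum_{\sigma\prec\tau}a^{|\tau|-|\sigma|}\sigma$. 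The case $j=-1$ is analogous, with the substitution $a \mapsto aq_k^{-1}$ precisely absorbing the $q_k^{-(|\sigma|-|\tau|)}$ discrepancy.

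For equation (\ref{eqn:commutation}), the cleanest route is to recognize the $\Gamma_{\pm 1}$ as classical boson vertex operators. Under the identification $\Gamma_j(x) = \exp\bigl(\sum_{n\geq 1}\tfrac{x^n}{n}\alpha_{jn}\bigr)$ in the Heisenberg algebra with $[\alpha_n,\alpha_m] = n\delta_{n+m,0}$, a short calculation gives $[A_i(a), A_j(b)] = \tfrac{j-i}{2}\ln(1-ab)$ as a scalar, and Baker--Campbell--Hausdorff then yields $\Gamma_i(a)\Gamma_j(b) = (1-ab)^{(j-i)/2}\Gamma_j(b)\Gamma_i(a)$. Alternatively, one can work directly from the interlacing definitions by acting on $\tau$ and comparing coefficients of $\rho$: the case $i=j$ reduces to the symmetry of $\sum_{\sigma:\rho\prec\sigma\prec\tau}a^{|\sigma|-|\rho|}b^{|\tau|-|\sigma|}$ under $a \leftrightarrow b$, which follows from a palindromic-polynomial identity once the independent ranges for each $\sigma_i$ are worked out, while the case $i\neq j$ produces an infinite family of intermediate partitions whose contribution collapses to $(1-ab)^{\pm 1}$ via a geometric-series identity.

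The main obstacle in a purely combinatorial proof is the case $i\neq j$ of (\ref{eqn:commutation}), where the geometric factor requires a careful bijection between chains of interlacers on each side together with bookkeeping of an infinite sum. Since both identities are classical and appear throughout the vertex-operator literature (cf.\ Section 7 of \cite{bcy:otv} and Okounkov--Reshetikhin--Vafa \cite{orv:qcycc}, whose conventions are being used here), I would present the short direct check of (\ref{eqn:commutation2}) in full and simply invoke the standard Heisenberg/BCH derivation for (\ref{eqn:commutation}).
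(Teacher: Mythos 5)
Your proposal is correct, and it matches the paper's treatment: the paper gives no proof of this lemma, deferring to Lemmas 29 and 30 of \cite{bcy:otv}, whose arguments are exactly the standard ones you outline (a direct telescoping check of $\Gamma_j(a)Q_k=Q_k\Gamma_j(aq_k^j)$ on a basis partition $\tau$, and the Heisenberg/BCH computation $[A_i(a),A_j(b)]=\tfrac{j-i}{2}\ln(1-ab)$ for the $\Gamma$--$\Gamma$ relation). The only load-bearing input you invoke without proof is the identification of the combinatorially defined interlacing operators with the exponentiated Heisenberg generators, which is classical and appropriately cited.
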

The interested reader can find proofs of these identities in \cite{bcy:otv}, Lemmas 29 and 30.

\subsubsection{Slope sequences}\label{slopesequences} Let $\bar\lambda(t)$ denote the \emph{slope sequence} of $\bar\lambda$. It is defined by setting 
\[
S(\bar\lambda)=:\{\bar\lambda_0-1,\bar\lambda_1-2,\bar\lambda_2-3,\dots\}
\]
and
\[
\bar\lambda(t):=\begin{cases}
+1 & t\in S(\bar\lambda)\\
-1 & t\notin S(\bar\lambda).
\end{cases}
\]

The relevance of the slope sequence is that it describes the boundary of the Young diagram $\bar\lambda$ (after rotating clockwise by $\pi/4$), see Figure \ref{fig:slope}. 


\begin{figure}
\hspace{-1.5cm}\begin{tikzpicture}
\begin{scope}[yshift=3.6cm,gray, very thin, scale=.6,]
\clip (-5.5, 0) rectangle (5.5,-5.5);
\draw[rotate=225, scale=1.412] (0,0) grid (6,6);
\end{scope}
\begin{scope}[yshift=3.6cm,rotate=45, very thick, scale=.6*1.412]
\draw (-0,-5.5) -- (-0, -3) -- (-1,-3) -- (-1,-3)-- (-2,-3) -- (-2,-2) -- (-3,-2) -- (-3,-1) -- (-4,-1) -- (-4,-0) -- (-5.5,-0);
\end{scope}
\begin{scope}[scale=.6, dotted,yshift=6cm]
\draw (4.5,0) -- (4.5, -4.5);
\draw (3.5,0) -- (3.5, -3.5);
\draw (2.5,0) -- (2.5,-3.5);
\draw (1.5,0) -- (1.5, -4.5);
\draw (.5,0) -- (.5, -4.5);
\draw (-.5,0) -- (-.5, -4.5);
\draw (-1.5,0) -- (-1.5, -4.5);
\draw (-2.5,0) -- (-2.5, -4.5);
\draw (-3.5,0) -- (-3.5, -4.5);
\draw (-4.5,0) -- (-4.5, -4.5);
\end{scope}
\begin{scope}[scale=.6, yshift=6.5cm]
\draw (-7.5,0)  node{$\bar\lambda(t)=$} node[above=3pt]{$t=$};
\draw (-5.5,0) node{$\cdots$};
\draw (-4.5,0)  node{$+1$} node[left=2pt,above=3pt]{$-5$};
\draw (-3.5,0)  node{$-1$} node[left=2pt,above=3pt]{$-4$};
\draw (-2.5,0)  node{$+1$} node[left=2pt,above=3pt]{$-3$};
\draw (-1.5,0)  node{$-1$} node[left=2pt,above=3pt]{$-2$};
\draw (-0.5,0)  node{$+1$} node[left=2pt,above=3pt]{$-1$};
\draw (0.5,0)  node{$-1$} node[above=3pt]{$0$};
\draw (1.5,0)  node{$+1$} node[above=3pt]{$1$};
\draw (2.5,0)  node{$+1$} node[above=3pt]{$2$};
\draw (3.5,0)  node{$-1$} node[above=3pt]{$3$};
\draw (4.5,0)  node{$-1$} node[above=3pt]{$4$};
\draw (5.5,0) node{$\cdots$};
\end{scope}
\end{tikzpicture}
\caption{Slope sequence for the partition $\bar\lambda=(3,3,2,1)$.}\label{fig:slope}
\end{figure}
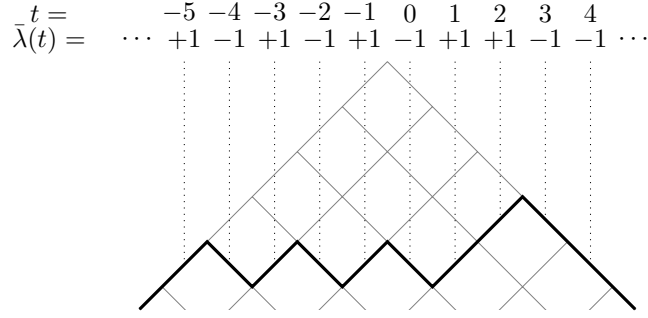

\subsubsection{The orbifold vertex} We are now ready to define the $A_{n-1}$ orbifold topological vertex. Let $\bq$ denote the variables $(q_0,\dots,q_{n-1})$ with indices computed modulo $n$ and $q:=q_0\cdots q_{n-1}$. Define the variables $\fq_t$ recursively by the rules $\fq_0=1$ and
\[
\fq_t=q_t\fq_{t-1}
\]
so that
\[
(\dots, \fq_{-2},\fq_{-1},\fq_0,\fq_1,\fq_2,\dots)=(\dots,q_{0}^{-1}q_{-1}^{-1},q_0^{-1},1,q_1,q_1q_2,\dots).
\]

\begin{definition}[c.f. \cite{bcy:otv}, Proposition 8]\label{lem:operators}
The \emph{reduced, multi-regular DT vertex for the $A_{n-1}$ vertex $\cX=[\bC^3/\bZ_n]$} is defined by
\[
P^{n}_{\rho^+,\rho^-,\bar\lambda}(\bq)=\frac{V^{\cX}_{\rho^+,\rho^-,\bar\lambda}(\bq)}{V^{\cX}_{\emptyset,\emptyset,\emptyset}(\bq)}
\]
where $V$ is the vertex operator expectation
\[
V^{n}_{\rho^+,\rho^-,\bar\lambda}(\bq):=q_0^{-|\rho^+|}\left\langle(\rho^-)'\Bigg| \prod_{t\in\bZ}^{\rightarrow}\Gamma_{\bar\lambda(t)}\left(\fq_t^{-\bar\lambda(t)} \right) \Bigg| \rho^+ \right\rangle
\]
where the arrow in the expectation denotes that the index $t$ is increasing from left to right.
\end{definition}

As proved in \cite{bcy:otv}, the formal series $P^{n}_{\rho^+,\rho^-,\bar\lambda}(\bq)$ is a suitably normalized generating function of 3D partitions, as in Figure \ref{monideal}, with asymptotic partitions given by $\rho^-$, $(\rho^+)'$, and $\bar\lambda'$ (again, Remark \ref{rmk:young} explains the conjugates).

\subsubsection{Loop Schur functions}\label{loopschur}

The $A_{n-1}$ orbifold topological vertex $P^{n}_{\rho^+,\rho^-,\bar\lambda}(\bq)$ can also be written in terms of loop Schur functions. In our related works \cite{rz:ggmv,r:cmn,rz:chialsf,r:gwdtcrc}, we heavily rely on the algebro-combinatorial structure of this formula. For completeness, we reproduce the formula here.

For the colored Young diagram $\bar\lambda=((\bar\lambda)_0\geq(\bar\lambda)_1\geq(\bar\lambda)_2\geq\dots)$, we define the sequence of variables $\fq_{\bullet-\bar\lambda}$ by
\[
\fq_{\bullet-\bar\lambda}:=(\fq_{-(\bar\lambda)_0},\fq_{1-(\bar\lambda)_1},\fq_{2-(\bar\lambda)_2},\dots).
\]
We set $\fq_\bullet:=\fq_{\bullet-\emptyset}$ and we use an overline on an expression in the $q$ variables to denote the exchange $q_i\leftrightarrow q_{-i}$. 

\begin{theorem}[\cite{bcy:otv}, Theorem 12]
The reduced Donaldson-Thomas vertex for $\cX$ can be written
\[
P^n_{\rho^+,\rho^-,\bar\lambda}(\bq)=\left(\prod_{(i,j)\in\bar\lambda}q_{j-i}^{i}\right)s_{\bar\lambda}(\bq)\sum_{\omega}q_0^{-|\omega|}\overline{s_{\rho^+/\omega}(\fq_{\bullet-\bar\lambda})}s_{(\rho^-)'/\omega}(\fq_{\bullet-\bar\lambda'})
\]
where $s_{\bar\lambda}(\bq)$ denotes the loop Schur function of $\bar\lambda$ in the variables $(q_0,\dots,q_{n-1})$ and $s_{\rho/\omega}$ denotes a usual skew Schur function.
\end{theorem}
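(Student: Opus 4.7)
The plan is to start from the vertex operator expectation in Definition \ref{lem:operators} and bring the infinite product $\prod_{t\in\bZ}^\rightarrow \Gamma_{\bar\lambda(t)}(\fq_t^{-\bar\lambda(t)})$ into normal-ordered form. After normal ordering, the expectation should factor into three pieces: (i) a monomial/loop-Schur prefactor encoding $\bar\lambda$, (ii) a Cauchy-type sum over partitions $\omega$ involving $\rho^+$ and $\rho^-$, and (iii) a divergent contribution depending only on the far ``tails'' of the slope sequence, independent of $(\rho^+,\rho^-,\bar\lambda)$, which cancels against $V^n_{\emptyset,\emptyset,\emptyset}$ in the reduction. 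The crucial observation is that the slope sequence satisfies $\bar\lambda(t)=+1$ for $t\ll 0$ and $\bar\lambda(t)=-1$ for $t\gg 0$, with only finitely many transitions in between, and that the pairs $(t_1,t_2)$ with $t_1<t_2$, $\bar\lambda(t_1)=-1$, $\bar\lambda(t_2)=+1$ are in canonical bijection with the boxes $(i,j)\in\bar\lambda$.

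The first main step is to use the commutation relation \eqref{eqn:commutation} to move every $\Gamma_{-1}$ past every $\Gamma_{+1}$ lying to its right. By the box bijection, the total accumulated commutator factorizes as a product indexed by $(i,j)\in\bar\lambda$ of terms of the form $(1-\fq_{t_1}\fq_{t_2}^{-1})^{-1}$; up to the monomial correction $\prod_{(i,j)\in\bar\lambda}q_{j-i}^{i}$, this product is the standard principal specialization of the loop Schur function $s_{\bar\lambda}(\bq)$. The monomial correction comes from rewriting $\fq_{t_1}\fq_{t_2}^{-1}$ in terms of the $q_k$ variables via the recursion $\fq_t=q_t\fq_{t-1}$ and keeping track of the resulting powers along the hook of each box.

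The second main step is to evaluate the now normal-ordered expectation
\[
\left\langle (\rho^-)' \,\Big|\, \prod_{t:\bar\lambda(t)=-1}\!\!\Gamma_{-1}(\fq_t)\prod_{t:\bar\lambda(t)=+1}\!\!\Gamma_{+1}(\fq_t^{-1})\,\Big|\, \rho^+\right\rangle
\]
using the classical identity $\Gamma_{-1}(x)|\nu\rangle = \sum_{\lambda\succ\nu} x^{|\lambda|-|\nu|}|\lambda\rangle$ and its $\Gamma_{+1}$ dual. Iterating and applying the Cauchy-type identity $\sum_\omega s_{\mu/\omega}(x)s_{\nu/\omega}(y)$ gives the expansion $\sum_\omega \overline{s_{\rho^+/\omega}(\fq_{\bullet-\bar\lambda})}\, s_{(\rho^-)'/\omega}(\fq_{\bullet-\bar\lambda'})$: the variable sequences $\fq_{\bullet-\bar\lambda}$ and $\fq_{\bullet-\bar\lambda'}$ parametrize the $+1$-entries and $-1$-entries of the slope sequence as described in Section \ref{slopesequences}, the overline on one factor encodes the reversal of order inherent in the dual action, and the conjugate $(\rho^-)'$ appears because of the English/French convention of Remark \ref{rmk:young}. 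The weight $q_0^{-|\omega|}$ is produced by the prefactor $q_0^{-|\rho^+|}$ in the definition of $V$ combined with degree tracking through the creation/annihilation sequence and the relation $q=q_0\cdots q_{n-1}$.

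The main obstacle is the regularization of the infinite product arising in the first step: most pairs of transition indices lie in the stabilized tails of the slope sequence, so their commutator contributions are formally infinite products independent of $(\rho^+,\rho^-,\bar\lambda)$. One must verify that these divergent contributions cancel identically against $V^n_{\emptyset,\emptyset,\emptyset}(\bq)$, leaving only the finite box-indexed factors. A closely related bookkeeping challenge is maintaining consistency of the color indices taken modulo $n$ throughout, since both the loop Schur function $s_{\bar\lambda}(\bq)$ and the skew Schur evaluations at the shifted sequences $\fq_{\bullet-\bar\lambda}, \fq_{\bullet-\bar\lambda'}$ depend delicately on the diagonal coloring of the boxes.
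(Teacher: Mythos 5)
First, note that the paper does not prove this statement: it is quoted verbatim from Bryan--Cadman--Young \cite{bcy:otv} (their Theorem 12) for completeness only, so there is no internal proof to compare against. Your overall strategy --- normal-ordering the product $\prod_t^{\rightarrow}\Gamma_{\bar\lambda(t)}(\fq_t^{-\bar\lambda(t)})$, reading off the skew Schur sum over $\omega$ from the normal-ordered expectation, and regularizing the accumulated commutator against the vacuum $V^n_{\emptyset,\emptyset,\emptyset}$ --- is the standard vertex-operator derivation and is sound in outline. Two points need attention, one of which is a genuine gap.

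The bookkeeping point: the pairs $t_1<t_2$ with $\bar\lambda(t_1)=-1$, $\bar\lambda(t_2)=+1$ (your box-indexed ``inversions'') are already in normal order --- $\Gamma_{-1}$ to the left of $\Gamma_{+1}$ --- and contribute no commutation factor at all; it is the complementary, infinite set of pairs with $\bar\lambda(t_1)=+1$, $\bar\lambda(t_2)=-1$ that must be commuted, each contributing $(1-\fq_{t_1}^{-1}\fq_{t_2})^{-1}$ by \eqref{eqn:commutation}, and only the finite symmetric difference with the vacuum configuration survives the division by $V^n_{\emptyset,\emptyset,\emptyset}$ and is indexed by boxes of $\bar\lambda$. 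Your first step and your ``main obstacle'' paragraph are inconsistent on this: the first asserts a finite box-indexed factorization outright, the second concedes it is infinite and needs the vacuum cancellation. The genuine gap is the identification of the resulting finite product with $\bigl(\prod_{(i,j)\in\bar\lambda}q_{j-i}^{i}\bigr)s_{\bar\lambda}(\bq)$. What the commutation argument actually produces is a hook-indexed product formula; equating that with the principally specialized loop Schur function is not ``standard'' --- for $n=1$ it is the hook-content formula, and for general $n$ it is a nontrivial colored hook-length identity for loop Schur functions (cf. \cite{r:cmn}). Bryan--Cadman--Young avoid needing it by a different route: they slice the one-legged 3D partitions perpendicular to the $\bar\lambda$-leg, so the generating function is manifestly the tableau sum defining $s_{\bar\lambda}(\bq)$. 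As written, your proposal assumes precisely the identity that carries the combinatorial content of the theorem.
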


\begin{remark}
Skew Schur functions are classical and the standard reference is Macdonald \cite{m:sfhp}. Loop Schur functions were introduced by Lam--Pylyavskyy \cite{lp:tpilg1} in the context of total positivity of matrix loop groups. References for loop Schur functions are a paper of Lam \cite{l:lsfafmp}, and more closely related to the topic at hand, a paper of the author \cite{r:cmn}. The series $s_{\bar\lambda}(\bq)$ are obtained from those defined in \cite{r:cmn} by specializing $x_{i,j}\rightarrow q_i^j$, and are expressible as rational functions in $\bq$.
\end{remark}

\subsection{Gluing formula}\label{sec:gluing}

The gluing algorithm of Bryan--Cadman--Young \cite{bcy:otv} describes how to recover the DT potential of $\cZ$ from the $A_{n-1}$ DT vertex. As it will be necessary below, we recall the algorithm here.

Let $\Lambda(\cZ)$ denote the set of edge assignments $\Lambda=\{\bar\lambda(e)\}_{e\in E_\cZ}$ where each $\bar\lambda(e)$ is a balanced $n_e$-colored partition. For an edge assignment $\Lambda$ and a vertex $v$, set $\bar\lambda(v)=(\bar\lambda_1(v),\bar\lambda_2(v),\bar\lambda_3(v))$ where
\[
\bar\lambda_i(v)=\begin{cases}
\bar\lambda(e_i(v)) & \text{ if } e_i(v) \text{ is outgoing}\\
\bar\lambda(e_i(v))' & \text{ if } e_i(v) \text{ is incoming}
\end{cases}
\]
and for each vertex $v$, set
\[
\bq_v=\begin{cases}
\bq_{e_3(v)} & \text{ if } e_3(v) \text{ is outgoing}\\
\overline{\bq_{e_3(v)}} & \text{ if } e_3(v) \text{ is incoming}
\end{cases}
\]
Theorem 10 of Bryan--Cadman--Young \cite{bcy:otv} can be restated as follows.

\begin{theorem}[\cite{bcy:otv}, Theorem 10]\label{thm:gluing}
Define 
\[
\underline{DT}(\cZ):=\sum_\Lambda\prod_e E_{\lambda_e}^{\cZ,e} \prod_v P^{n_{e_v}}_{\Lambda(v)}(\bq_v)
\]
where 
\[
E_{\lambda}^{\cZ,e}:=v_e^{|\bar\lambda|/n_e}(-1)^{(m_e+\delta_{e,0}+\delta_{e,\infty})|\bar\lambda|}\prod_{(i,j)\in\bar\lambda}q_{e,j-i}^{-m_ej-m_e'i+1}
\]
Then the reduced, multi-regular DT partition $DT(\cZ)$ is obtained from $\underline{DT}(\cZ)$ by adding a minus sign to the variables $q_{e,0}$ (and hence also to $q$).
\end{theorem}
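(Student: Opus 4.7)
The plan is to apply equivariant torus localization to the Hilbert schemes appearing in the definition of $DT_\gamma(\cZ)$. Since $\cZ$ is toric, the algebraic torus $T=(\bC^*)^3$ acts on each $\text{Hilb}_\gamma(\cZ)$, and because the Behrend $\nu$-weighted Euler characteristic is constructible and $T$-invariant, $DT_\gamma(\cZ)$ reduces to a sum over $T$-fixed ideal sheaves. A $T$-fixed closed substack of $\cZ$ is a monomial ideal in each vertex chart $[\bC^3/\bZ_{n_{e_3(v)}}]$, and globally it is specified by a 3D asymptotic partition at each vertex $v$ together with matching 2D asymptotic partitions along each edge $e$. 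After restricting to multi-regular classes, the asymptotic partitions along $e$ must be balanced $n_e$-colored partitions; these are exactly the data parametrized by $\Lambda(\cZ)$.

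The generating series then splits into contributions at each vertex and edge of the web diagram. At each vertex $v$, the generating function over 3D partitions with prescribed asymptotics $\Lambda(v)$, weighted by box color via $\bq_v$, is by construction the normalized orbifold topological vertex $P^{n_{e_v}}_{\Lambda(v)}(\bq_v)$ of Definition \ref{lem:operators}, the division by $V^\cX_{\emptyset,\emptyset,\emptyset}$ implementing the reduction by $DT_0$. It remains to identify the edge contribution $E_{\bar\lambda}^{\cZ,e}$, which encodes the ``infinite cylinder'' of cross-section $\bar\lambda$ stretching along $L_e$ and which is not yet counted by either endpoint's vertex.

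The three factors in $E_{\bar\lambda}^{\cZ,e}$ arise from an equivariant Euler characteristic computation on the cylinder. The power $v_e^{|\bar\lambda|/n_e}$ tracks the curve class, since $|\bar\lambda|/n_e$ is the multi-regular contribution of the cylinder to the pushforward of $[\cO_V]$. The monomial $\prod_{(i,j)\in\bar\lambda} q_{e,j-i}^{-m_e j - m_e' i + 1}$ comes from equivariant Riemann--Roch on $L_e\cong\bP^1$: each box $(i,j)\in\bar\lambda$ contributes a section space built from a twist of the normal bundle by $(-m_e j, -m_e' i)$, with sections tracked by the $\bZ_{n_e}$-character $j-i \bmod n_e$; the $+1$ offset encodes the shift from the balance point of the equivariant character. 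The sign $(-1)^{(m_e+\delta_{e,0}+\delta_{e,\infty})|\bar\lambda|}$ is the part of the Behrend function contribution localized along $e$ that is not absorbed into the vertex generating functions.

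The principal obstacle is the Behrend function analysis in the orbifold setting. Using the identity $\nu(V) = (-1)^{\dim T_V \text{Hilb}}$ at smooth $T$-fixed points of the Hilbert scheme, the parity of the tangent weight splits, as in Maulik--Nekrasov--Okounkov--Pandharipande's smooth calculation, into a vertex parity (which can be absorbed into the vertex series) and an edge parity (which gives the sign factor above). The $\delta_{e,0}, \delta_{e,\infty}$ corrections arise from whether each endpoint's normal direction along $L_e$ is labelled $e_3$, i.e.\ treated as the ``orbifold direction'' at that vertex. The residual global sign factors cleanly through $q_{e,0}\mapsto -q_{e,0}$, equivalently $q\mapsto -q$, recovering the stated transition from $\underline{DT}(\cZ)$ to $DT(\cZ)$.
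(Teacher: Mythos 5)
This statement is quoted verbatim (modulo notation) from Bryan--Cadman--Young and is not proved in the paper at all; the author only \emph{restates} Theorem 10 of \cite{bcy:otv}, after adjusting the $\delta$ conventions so as to absorb the $A_\lambda$ factors appearing in the original (see the Remark following the definition of $\cN_l$, $\cN_r$). So there is no in-paper argument to compare against, and the relevant benchmark is the proof in \cite{bcy:otv}. Your outline does follow that proof's architecture: localization of the $\nu$-weighted Euler characteristic to $T$-fixed monomial ideals, identification of fixed points with vertex 3D partitions matching along edge 2D partitions, factorization into vertex series times edge terms, and a separate sign analysis.

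That said, as a proof the proposal has real gaps precisely where the content of the theorem lies. First, the edge factor: the exponents $-m_e j - m_e' i + 1$ and the role of $\delta_{e,0},\delta_{e,\infty}$ come from a careful renormalized box count on the infinite cylinder glued via the transition functions of $\cN_l\oplus\cN_r$ (equivalently, from comparing two different ``zero-volume'' normalizations of the legs at the two vertices); invoking ``equivariant Riemann--Roch'' and a ``$+1$ offset from the balance point'' names the answer without deriving it, and you do not verify that the formula as stated here --- with the paper's modified $\delta$'s --- agrees with BCY's original edge term. Second, the sign: the identity $\nu(P)=(-1)^{\dim T_P\mathrm{Hilb}}$ is needed at \emph{all} $T$-fixed points, not only smooth ones (this is the Behrend--Fantechi result for symmetric obstruction theories with torus action, and its applicability to the orbifold Hilbert scheme is itself a point BCY must address); the splitting of the tangent-space parity into a vertex piece absorbed by $q_{e,0}\mapsto -q_{e,0}$ and the residual edge piece $(-1)^{(m_e+\delta_{e,0}+\delta_{e,\infty})|\bar\lambda|}$ is a genuine computation, not a formal consequence. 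Third, the normalization: you assert that dividing each vertex by $V^{\cX}_{\emptyset,\emptyset,\emptyset}$ implements division by $DT_0(\cZ)$, which presupposes that $DT_0(\cZ)$ factors as the product over vertices of the degree-zero local series; this also requires proof. None of these gaps indicates a wrong approach --- it is the right approach --- but each is a substantial step that the proposal asserts rather than establishes.
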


\subsubsection{Resolutions}

Let $Y$ be the toric resolution of $\cX$. Then $Y$ contains a chain of $n-1$ $\bP^1$s, all of which have normal bundle $\cO\oplus\cO(-2)$. On each compact edge in the web diagram, corresponding to one of the $\bP^1$s, choose the orientation for which $N_r=\cO$. Orient each noncompact edge outward. At each of the $n$ vertices, choose $e_3(v)$ to be the edge corresponding to the fiber in the trivial direction. Then it is easy to check that the edge term in Theorem \ref{thm:gluing} becomes
\[
E_{\lambda_e}=(qv_e)^{|\lambda_e|}
\]
where $\lambda_e$ is a usual partition.

Therefore, we can write 
\begin{align}\label{eqn:closeddty}
\nonumber\underline{DT}(Y)&=\sum_{\tau_1,\dots,\tau_{n-1}} P^1_{\tau_1,\emptyset,\emptyset}(\bq)(qv_{1})^{|\tau_{1}|}P^1_{\tau_{2},\tau_{1}',\emptyset}(\bq)(qv_{2})^{|\tau_{2}|}\\
&\cdots (qv_{n-2})^{|\tau_{n-2}|}P^1_{\tau_{n-1},\tau_{n-2}',\emptyset}(\bq)(qv_{n-1})^{|\tau_{n-1}|}P^1_{\emptyset,\tau_{n-1}',\emptyset}(\bq)
\end{align}

To obtain the analog of $P^{\cX}_{\rho^+,\rho^-,\bar\lambda}(\bq)$ on the resolution, we generalize \eqref{eqn:closeddty} by modifying the vertex terms:
\begin{align}\label{eqn:opendty}
\nonumber P^Y_{\rho^+,\rho^-,(\lambda_0,\dots,\lambda_{n-1})}&(q,\bv):=\sum_{\tau_1,\dots,\tau_{n-1}} P^1_{\tau_1,\rho^-,\lambda_0}(q)(qv_1)^{|\tau_1|}P^1_{\tau_2,\tau_1',\lambda_1}(q)(qv_2)^{|\tau_2|}\\
&\cdots (qv_{n-2})^{|\tau_{n-2}|}P^1_{\tau_{n-1},\tau_{n-2}',\lambda_{n-2}}(q)(qv_{n-1})^{|\tau_{n-1}|}P^1_{\rho^+,\tau_{n-1}',\lambda_{n-1}}(q)
\end{align}
%

\section{Vertex crepant resolution conjecture}\label{sec:vertexcrc}

The crepant resolution conjecture for the DT orbifold topological vertex is most easily written in terms of the normalized series:
\[
\tilde P^{\cX}_{\rho^+,\rho^-,\bar\lambda}(\bq):=\left(\frac{\chi_{\bar\lambda}(n^d)}{\dim(\lambda)}\prod_{(i,j)\in\bar\lambda}q_{j-i}^{i}\right)P^{\cX}_{\rho^+,\rho^-,\bar\lambda}(\bq)
\]
and
\[
\tilde P^Y_{\rho^+,\rho^-,(\lambda_0,\dots,\lambda_{n-1})}:= \left(\prod_k\prod_{(i,j)\in\lambda_k}q^{i}\right)P^Y_{\rho^+,\rho^-,(\lambda_0,\dots,\lambda_{n-1})}.
\]

The following theorem is the technical heart of this paper.

\begin{theorem}\label{thm:vertexcrc}
After the change of variables $v_i\leftrightarrow q_i$,
\[
\tilde P^{\cX}_{\rho^+,\rho^-,\bar\lambda}(\bq)=\frac{\tilde P^Y_{\rho^+,\rho^-,\lambda}(q,\bv)}{\tilde P^Y_{\emptyset,\emptyset,\emptyset}(q,\bv)}\left(\prod_{(i,j)\in\lambda_k}\left((-1)^{n-k-1}q^{(n-k-1)(i-j)}\prod_{l> k}q_l^{n-l} \right)\right)
\]
where $\lambda=(\lambda_0,\dots,\lambda_{n-1})$ is the $n$-quotient of $\bar\lambda$ (see Section \ref{sec:quotients} below for a review of $n$-quotients). 
\end{theorem}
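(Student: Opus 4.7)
The plan is to prove the theorem by reorganizing the vertex operator product in $V^{\cX}_{\rho^+,\rho^-,\bar\lambda}(\bq)$ according to the residue of $t$ modulo $n$, and matching the result with the nested product of smooth vertices defining $P^Y_{\rho^+,\rho^-,\lambda}(q,\bv)$. The conceptual bridge is the abacus-theoretic fact that the $n$-quotient components $\lambda_k$ of a balanced $n$-colored partition $\bar\lambda$ are encoded in its slope sequence by residue: the subsequence $s\mapsto\bar\lambda(ns+k)$ is precisely the slope sequence of $\lambda_k$. This identifies which pieces of the orbifold operator product assemble into smooth vertex expectations.

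First I would use the commutation relation \eqref{eqn:commutation} iteratively to rearrange
\[
\prod_{t\in\bZ}^{\rightarrow}\Gamma_{\bar\lambda(t)}\bigl(\fq_t^{-\bar\lambda(t)}\bigr)
\]
so that operators with $t\equiv k\pmod n$ form a contiguous block, ordered $k=0,1,\ldots,n-1$ from right to left. Each commutation of a $\Gamma_{+1}(a)$ with a $\Gamma_{-1}(b)$ introduces a factor $(1-ab)^{\pm 1}$ depending only on the $\fq_t$'s, and these accumulate into an overall factor $\mathcal{C}(\bq,\bar\lambda)$. Next, inserting resolutions of the identity $\sum_{\tau_k}|\tau_k\rangle\langle\tau_k|$ between successive blocks and using the relation $\fq_{ns+k}=q^s\fq_k$ together with \eqref{eqn:commutation2}, each block reduces to the operator expression for a smooth vertex $V^{1}_{\tau_{k+1},\tau_k',\lambda_k}(q)$ up to a monomial prefactor. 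After identifying $v_k\leftrightarrow q_k$, the $|\tau_k|$-weighted monomial prefactors combine with parts of $\mathcal{C}$ to produce the edge weights $(qv_k)^{|\tau_k|}$ of \eqref{eqn:opendty}, identifying the reorganized expectation with the summand structure of $P^Y_{\rho^+,\rho^-,\lambda}(q,\bv)$ times a $\bar\lambda$-dependent prefactor built from the residual part of $\mathcal{C}$.

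The hardest step is to show that, after dividing both sides by their respective empty vertices, this residual prefactor collapses to the finite box-product stated in the theorem. Empty-vertex division cancels the ``universal'' commutation factors (those coming from the eventually-constant tail slopes of the empty partition), leaving contributions only from slope-flips that encode boxes of $\bar\lambda$. Under the abacus bijection, each such contribution corresponds to a box $(i,j)\in\lambda_k$ for some $k$, and the task is to verify that its residual weight is exactly $(-1)^{n-k-1}q^{(n-k-1)(i-j)}\prod_{l>k}q_l^{n-l}$, once one subsumes the factors $\chi_{\bar\lambda}(n^d)/\dim(\lambda)$ and $\prod_{(i,j)\in\bar\lambda}q_{j-i}^{i}$ defining $\tilde P^{\cX}$ and the factor $\prod_k\prod_{(i,j)\in\lambda_k}q^{i}$ defining $\tilde P^Y$. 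The sign $(-1)^{n-k-1}$ should match the border-strip height statistic defining $\chi_{\bar\lambda}(n^d)/\dim(\lambda)$, pulled back through the abacus correspondence. This final position-by-position bookkeeping is the main obstacle; the rest of the argument is largely mechanical given the commutation relations \eqref{eqn:commutation}--\eqref{eqn:commutation2} and the abacus/$n$-quotient dictionary.
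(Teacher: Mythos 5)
Your strategy is essentially the paper's own proof run in the opposite direction: the paper starts from the resolution side, writes \eqref{eqn:opendty} as a single concatenated expectation with $Q_k$ insertions, pushes the $Q_k$'s to the right to get arguments $\fq_{nt+l}^{-\lambda_l(t)}$, and then \emph{interlaces} the blocks into the orbifold operator product using exactly the slope-sequence/$n$-quotient dictionary you invoke; you propose to de-interlace the orbifold product into residue blocks instead. Either direction works and the commutation factor $\mathcal{C}(\bq,\bar\lambda)$ you describe is precisely the paper's $F_\lambda(N)$ (with the empty-vertex division handling the divergent tail, as you correctly anticipate). The one place your outline stops short is the step you yourself flag as "the main obstacle": evaluating the residual factor box by box is not a routine verification but is the entire content of the paper's argument after the setup. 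The paper reduces it to a single-box statement (Proposition \ref{lem:commuteidentity}: removing one box from $\lambda_k$ corresponds to removing one $n$-border strip from $\bar\lambda$, and the ratio $\lim_N F_\lambda(N)/F_\nu(N)$ must equal $(-1)^{ht(\bar\lambda\setminus\bar\nu)}\prod q_{s-r}^r$ over the strip up to the stated monomial), then extracts a combinatorial formula for that ratio in terms of the lengths of the $(j-i)$th and adjacent diagonals of the other $\lambda_l$, verifies a base case where $\lambda_l=\emptyset$ for $l\neq k$, and finally runs an induction showing that adding a box to some $\lambda_l$ shifts the relevant border strip and changes both sides of the identity by the same factor \eqref{eqn:lastfactor}. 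Without some such organization, "position-by-position bookkeeping" is not obviously finite or well-posed, since the weight attached to a single box of $\lambda_k$ depends on all the other components $\lambda_l$. Two smaller cautions: your blocks must be ordered with the $k=0$ block leftmost (adjacent to $\langle(\rho^-)'|$), matching \eqref{eqn:opendty} and \eqref{eqn:operatorexpectation} --- you wrote the reverse order, which would attach $\rho^-$ to the wrong leg; and the $(qv_k)^{|\tau_k|}$ weights are absorbed by inserting $Q_k$ operators and using \eqref{eqn:commutation2}, not by the $\Gamma$-commutations, so they should be kept separate from $\mathcal{C}$.
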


The rest of Section \ref{sec:vertexcrc} is devoted to the proof of Theorem \ref{thm:vertexcrc}.

\subsection{Quotients}\label{sec:quotients}

As it is essential in both the statement and the proof of Theorem \ref{thm:vertexcrc}, we briefly recall the basic definitions from the theory of $n$-quotients. From $\lambda=(\lambda_0,\dots,\lambda_{n-1})$, we obtain $n$ slope sequences $(\lambda_0(t),\dots,\lambda_{n-1}(t))$ as described in \ref{slopesequences}. From these $n$ slope sequences, we define a new slope sequence
\begin{equation}\label{eqn:quotients}
\bar\lambda(t):=\lambda_{\underline{t}}\left(\frac{t-\underline{t}}{n}\right)
\end{equation}
where $\underline{t}:=t$ mod $n$. Concretely, all we are doing is simply interlacing the slope sequences $(\lambda_0(t),\dots,\lambda_{n-1}(t))$.

\begin{definition}
If the slope sequences for $\lambda$ and $\bar\lambda$ are related as in \eqref{eqn:quotients}, we say that $\lambda$ is the \emph{$n$-quotient} of $\bar\lambda$. 
\end{definition}

\begin{remark}
Usually when one speaks of $n$-quotients, they also mention $n$-cores. However, the balanced partitions we consider in this paper are characterized by the property that they have empty $n$-core.
\end{remark}

It will be helpful later to have a better understanding of the $n$-quotient correspondence in terms of Young diagrams, rather than slope sequences. In particular, we make the following observation which can easily be checked.

\begin{observation}
Adding a single box to the $(i,j)$ position of $\lambda_l$ corresponds to adding a length $n$ border strip to the colored partition $\bar\lambda$. This border strip has exactly one box of each color, the northeastern-most box has color $l$ and the unique color $0$ box lies in the $n(j-i)$ diagonal of $\bar\lambda$.
\end{observation}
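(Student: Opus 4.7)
The plan is to reduce everything to the effect on slope sequences and then to invoke the interleaving formula \eqref{eqn:quotients}. The starting elementary fact is: for any partition $\rho$, adding a box at position $(i,j)$ (so that $\rho_i$ changes from $j$ to $j+1$) has the effect on the slope sequence of swapping $\rho(j-i-1)=+1$ with $\rho(j-i)=-1$. This follows directly from the definition $S(\rho)=\{\rho_k-(k+1)\}_{k\geq 0}$, since only the row-$k=i$ contribution moves, namely from $j-i-1$ to $j-i$.

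Applying this to $\rho=\lambda_l$ and composing with the interleaving $\bar\lambda(ns+l)=\lambda_l(s)$, adding a box at $(i,j)$ to $\lambda_l$ alters the slope sequence of $\bar\lambda$ at exactly two positions: $\bar\lambda(n(j-i-1)+l)$ changes from $+1$ to $-1$, and $\bar\lambda(n(j-i)+l)$ changes from $-1$ to $+1$, while every other $\bar\lambda(t)$ and every $\lambda_{l'}$-slope for $l'\neq l$ is unchanged.

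Next, I would identify the resulting change to the Young diagram of $\bar\lambda$. Indexing the positions of $+1$'s in $\bar\lambda$'s slope sequence from right to left as $t_0>t_1>t_2>\cdots$, so that $\bar\lambda_r=t_r+r+1$, and writing $p=n(j-i-1)+l$, the swap deletes the (unique) $t_s=p$ and inserts $p+n$ at the index $a\leq s$ determined by $t_{a-1}>p+n>t_a$. A direct computation of the updated row lengths $\bar\lambda_r^{\mathrm{new}}=t_r^{\mathrm{new}}+r+1$ shows that (i) only rows $a,a+1,\dots,s$ change; (ii) the row-increments sum telescopically to $n$; and (iii) the ribbon condition $\bar\lambda_{r+1}^{\mathrm{new}}=\bar\lambda_r+1$ holds for $a\leq r<s$. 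Hence the added cells form a connected length-$n$ border strip containing no $2\times 2$ square.

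Finally, the diagonals of the added cells, concatenated row by row, telescope to the consecutive block $p+1,p+2,\dots,p+n=n(j-i-1)+l+1,\dots,n(j-i)+l$, so each color residue mod $n$ is represented exactly once. The unique diagonal in this block divisible by $n$ is $n(j-i)$, confirming that the unique color-$0$ box lies on diagonal $n(j-i)$; and the northeastern-most cell (the rightmost cell in the topmost affected row $a$) lies on the maximal diagonal $n(j-i)+l$, hence has color $l\pmod n$. The main bookkeeping hurdle is keeping the slope-sequence indexing consistent with the English-notation row conventions; beyond that, the claim is an immediate consequence of the interleaving formula.
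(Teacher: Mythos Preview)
Your argument is correct. The paper does not supply a proof of this observation; it simply declares that it ``can easily be checked'' and illustrates it with Example~\ref{ex:addbox}. Your write-up fills in exactly the details one would expect: the elementary fact that adding a box at $(i,j)$ swaps the $+1$ at slope-index $j-i-1$ with the $-1$ at $j-i$, the transport of this swap through the interleaving \eqref{eqn:quotients} to positions $p=n(j-i-1)+l$ and $p+n$ in the slope sequence of $\bar\lambda$, and the identification of the resulting skew shape as a ribbon via the row-length computation $\bar\lambda_{r+1}^{\mathrm{new}}=\bar\lambda_r+1$. The telescoping of the diagonals to the block $p+1,\dots,p+n$ is the clean way to read off the color statements, and your identification of $n(j-i)$ as the unique multiple of $n$ in that block, and of $p+n\equiv l\pmod n$ as the maximal (hence northeastern-most) diagonal, is right.

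Since the paper offers no argument beyond the example, there is no meaningful methodological contrast to draw; your approach via slope sequences is the natural one in this setup and is precisely what the phrase ``easily checked'' is gesturing at.
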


\begin{example}\label{ex:addbox}
Suppose $n=5$, $\lambda_0=(1)$, $\lambda_1=\emptyset$, $\lambda_2=(2,1)$, $\lambda_3=(2)$, and $\lambda_4=\emptyset$. Then one checks that $\bar\lambda$ is the Young diagram
\[
\begin{ytableau}
0 & 1 & 2 & 3 & 4 & 0 & 1 & 2 & 3\\
4 & 0 & 1 & 2 & 3 & 4 & 0 & 1 & 2\\
3 & 4 & 0\\
2 & 3 & 4\\
1 & 2\\
0 & 1\\
4\\
3\\
\end{ytableau}
\]
where the numbers denote the colors. Now if we add a box in the $(1,1)$ position of $\lambda_2$, the colored Young diagram becomes
\[
\begin{ytableau}
\circled{0} & \circled{1} & \circled{2} & 3 & 4 & 0 & 1 & 2 & 3\\
\circled{4} & 0 & 1 & 2 & 3 & 4 & 0 & 1 & 2\\
\circled{3} & 4 & 0 & *(lgray) 1 & *(lgray) 2\\
2 & 3 & 4 & *(lgray) 0\\
1 & 2 & *(lgray) 3 & *(lgray) 4\\
0 & 1\\
4\\
3\\
\end{ytableau}
\]
We have suggestively decorated the second Young diagram. The highlighted boxes are the new border strip. Rather than interpreting the modification in terms of simply adding these new boxes, a more useful interpretation is to add the boxes containing circles along the base and to push out the pre-existing boxes along their respective diagonals.
\end{example}

\subsection{Interlacing operators}

Consider the formal function $P^Y_{\rho^+,\rho^-,\lambda}(q,\bv)$ defined in \eqref{eqn:opendty}. By definition, we can write
\[
\frac{P^Y_{\rho^+,\rho^-,\lambda}(q,\bv)}{P^Y_{\emptyset,\emptyset,\emptyset}(q,\bv)}=\frac{V^Y_{\rho^+,\rho^-,\lambda}(q,\bv)}{V^Y_{\emptyset,\emptyset,\emptyset}(q,\bv)}
\]
where
\begin{align*}
V^Y_{\rho^+,\rho^-,\lambda}(q,\bv)=q^{-|\rho^+|}\sum_{\tau_1,\dots,\tau_{n-1}} &\left\langle(\rho^-)'\Bigg| \prod_{t\in\bZ}^{\rightarrow}\Gamma_{\lambda_0(t)}\left(q^{-t\lambda_0(t)} \right) \Bigg| \tau_1 \right\rangle v_1^{|\tau_1|}\\
&\cdot\left\langle\tau_1\Bigg| \prod_{t\in\bZ}^{\rightarrow}\Gamma_{\lambda_1(t)}\left(q^{-t\lambda_1(t)} \right) \Bigg| \tau_2 \right\rangle v_2^{|\tau_2|}\\
&\hspace{-.5cm}\cdots v_{n-1}^{|\tau_{n-1}|}\left\langle\tau_{n-1}\Bigg| \prod_{t\in\bZ}^{\rightarrow}\Gamma_{\lambda_{n-1}(t)}\left(q^{-t\lambda_{n-1}(t)} \right) \Bigg| \rho^+ \right\rangle
\end{align*}

We now perform the change of variables $v_i\rightarrow q_i$, approximate the infinite operator expressions with finite ones, and concatenate the expectations in the natural way. We obtain
\begin{align*}
V^Y_{\rho^+,\rho^-,\lambda}(\bq)=q^{-|\rho^+|}\lim_{N\rightarrow\infty}\Bigg\langle&(\rho^-)'\Bigg|\prod_{t}^{\rightarrow}\Gamma_{\lambda_0(t)}\left(q^{-t\lambda_0(t)} \right) Q_1 \prod_{t}^{\rightarrow}\Gamma_{\lambda_1(t)}\left(q^{-t\lambda_1(t)} \right) Q_2 \\
&\cdots Q_{n-1} \prod_{t}^{\rightarrow}\Gamma_{\lambda_{n-1}(t)}\left(q^{-t\lambda_{n-1}(t)} \right)\Bigg| \rho^+ \Bigg\rangle
\end{align*}
where the index $t$ satisfies $-N\leq t\leq N-1$. 

If we commute all of the $Q_k$ operators to the right, then we arrive at the following expression
\begin{align}\label{eqn:operatorexpectation}
\nonumber V^Y_{\rho^+,\rho^-,\lambda}(\bq)=q_0^{-|\rho^+|}\lim_{N\rightarrow\infty}\Bigg\langle&(\rho^-)'\Bigg|\prod_{t}^{\rightarrow}\Gamma_{\lambda_0(t)}\left(\fq_{nt}^{-\lambda_0(t)} \right) \prod_{t}^{\rightarrow}\Gamma_{\lambda_1(t)}\left(\fq_{nt+1}^{-\lambda_1(t)} \right) \\
&\cdots \prod_{t}^{\rightarrow}\Gamma_{\lambda_{n-1}(t)}\left(\fq_{nt+n-1}^{-\lambda_{n-1}(t)} \right)\Bigg| \rho^+ \Bigg\rangle
\end{align}
The final step is to interlace the operators appearing in expression \eqref{eqn:operatorexpectation} so that the indices on the $\fq$ variables are increasing from left to right. By definition, this interlacing of slope sequences produces the slope sequence for $\bar\lambda$. Therefore, we have
\[
V^Y_{\rho^+,\rho^-,\lambda}(\bq)=q_0^{-|\rho^+|}\lim_{N\rightarrow\infty}F_\lambda(N)\Bigg\langle(\rho^-)'\Bigg|\prod_{-nN\leq t\leq nN-1}^{\rightarrow}\Gamma_{\bar\lambda(t)}\left(\fq_t^{-\bar\lambda(t)}\right)\Bigg| \rho^+ \Bigg\rangle
\] 
where the factor $F_\lambda(N)$ arises from commuting the $\Gamma_{\pm}$ operators -- notice that this factor does not depend on $\rho^{\pm}$. The expectation on the right is simply the numerator in the vertex operator expression for $P^{\cX}_{\rho^+,\rho^-,\lambda}(\bq)$ in the limit $N\rightarrow\infty$. Therefore, in order to prove Theorem \ref{thm:vertexcrc}, it is left to analyze the limit
\[
\lim_{N\rightarrow\infty}\frac{F_\lambda(N)}{F_\emptyset(N)}.
\]
In particular, it suffices to prove the following
\begin{equation}\label{eqn:commuteidentity}
\prod_{(i,j)\in\lambda_k}\left((-1)^{n-k-1}q^{(n-k)(i-j)+j}\prod_{l> k}q_l^{n-l} \right)\lim_{N\rightarrow\infty}\frac{F_\lambda(N)}{F_\emptyset(N)}=\frac{\chi_{\bar\lambda}(n^d)}{\dim(\lambda)}\prod_{(i,j)\in\bar\lambda}q_{j-i}^i
\end{equation}
We prove \eqref{eqn:commuteidentity} inductively by systematically removing $n$-border strips from $\bar\lambda$. More specifically, let $\nu=(\nu_0,\dots,\nu_{n-1})$ be an $n$-tuple of partitions such that $\nu_i=\lambda_i$ for $i\neq k$ and $\lambda_k\setminus\nu_k$ is a single box in the $(i,j)$ position. Theorem \ref{thm:vertexcrc} follows from the following proposition.

\begin{proposition}\label{lem:commuteidentity}
\[
(-1)^{n-k-1}q^{(n-k)(i-j)+j}\prod_{l> k}q_l^{n-l}\lim_{N\rightarrow\infty}\frac{F_\lambda(N)}{F_\nu(N)}=(-1)^{ht(\bar\lambda\setminus\bar\nu)}\prod_{(r,s)\in\bar\lambda\setminus\bar\nu}q_{s-r}^r
\]
\end{proposition}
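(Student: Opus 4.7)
The plan is to compute $\lim_{N\to\infty} F_\lambda(N)/F_\nu(N)$ directly from the commutation relation \eqref{eqn:commutation} and match the result with the right-hand side using the $n$-quotient structure. Setting $s_1 := j - i - 1$, $s_2 := j - i$, and $a_\ell := ns_\ell + k$, one has $a_2 = a_1 + n$ and $\fq_{a_2} = q \fq_{a_1}$. Adding the box at $(i,j) \in \lambda_k$ flips $\lambda_k(s_1)$ from $+1$ to $-1$ and $\lambda_k(s_2)$ from $-1$ to $+1$; hence the only operators in the product defining $F$ that change are those at positions $(k,s_1)$ and $(k,s_2)$ in the $k$-th block. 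Consequently, $F_\lambda/F_\nu$ factors as a product of elementary contributions indexed by the other operators $(k',t')$, $k' \neq k$, whose swap with $(k, s_1)$ or $(k, s_2)$ actually occurs during the interlacing (i.e.\ $k < k'$ and $s > t'$, or $k > k'$ and $s < t'$).

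The key step is to organize these factors by the range of $u := nt' + k'$ relative to $\{a_1, a_2\}$. For the \emph{exterior} range ($u < a_1$ with $k' > k$, or $u > a_2$ with $k' < k$), both swaps occur; a case analysis on $y := \lambda_{k'}(t') \in \{\pm 1\}$ together with the elementary identity
\[
\frac{1 - v^{-1}}{1 - q^{-1} v^{-1}} = q\,\frac{1 - v}{1 - q v}
\]
puts each combined contribution in the uniform form $q^{[y=-1]}(1 - v_u)/(1 - q v_u)$ with $v_u := \fq_{a_1}/\fq_u$. Since consecutive values of $t'$ satisfy $q v_{t'+1} = v_{t'}$, the column-wise product telescopes: as a formal Laurent series, $v_{t'}$ has arbitrarily large positive $q$-order as $|t'| \to \infty$, so the tail contributes $1$ in the limit and one is left with a finite expression in the boundary values $v_{s_1 - 1}^{(k')}$ plus an accumulated power of $q$ equal to $\#\{t' \leq s_1 - 1 : \lambda_{k'}(t') = -1\}$. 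An analogous argument handles the $u > a_2$ side. For the \emph{interior} range $a_1 < u < a_2$ (exactly $n - 1$ positions), only one of the two swaps occurs, and a direct case-split on $k' > k$ versus $k' < k$ produces an uncanceled factor $(1 - (\fq_{a_*}/\fq_u)^y)^{-1}$ with $y = \bar\nu(u)$ and $a_* \in \{a_1, a_2\}$ depending on the case.

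Combining the telescoped exterior product, the interior factors, and the prefactor $(-1)^{n-k-1}q^{(n-k)(i-j)+j}\prod_{l > k}q_l^{n-l}$, the accumulated powers of $q$ together with the interior/exterior factors should collapse to a pure monomial in $q_0,\ldots,q_{n-1}$ times an overall sign. The sign $(-1)^{ht(\bar\lambda\setminus\bar\nu)}$ is recovered from the standard interpretation of border-strip heights in terms of slope sequences, namely that $ht(\bar\lambda\setminus\bar\nu)$ equals the number of positions $a_1 < t < a_2$ with $\bar\nu(t) = +1$, each of which contributes a sign when the corresponding interior factor is expanded. Finally, the monomial is matched with $\prod_{(r,s)\in\bar\lambda\setminus\bar\nu}q_{s-r}^r$ via the border-strip/$n$-quotient correspondence recalled in Section \ref{sec:quotients}: the cells $(r,s)$ of $\bar\lambda\setminus\bar\nu$ contain exactly one box of each color $0,\ldots,n-1$, with diagonal positions $s-r$ determined by the data $(i,j,k)$ together with the shape of the strip.

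The principal obstacle is the exterior telescoping: a priori the product is infinite and depends on the full data of each $\lambda_{k'}$, yet the proposition demands that the ratio depend on the ambient $\lambda$ only through $ht(\bar\lambda\setminus\bar\nu)$. Verifying formal convergence, tracking the exact power of $q$ accumulated from each $y = -1$ factor, and showing that this accumulation combines with the $n-1$ interior factors and the prefactor to yield precisely the desired monomial and sign is the main computational task; once this is carried out, the matching in the final step is a direct combinatorial verification.
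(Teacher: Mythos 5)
Your overall strategy is the paper's: compute $F_\lambda(N)/F_\nu(N)$ by tracking, via \eqref{eqn:commutation}, the factors produced when the operators of the columns $l\neq k$ pass the two operators of column $k$ whose slopes flip, and control the product by telescoping along runs of constant slope. Your bookkeeping by interlaced position $u$ relative to $\{a_1,a_2\}$ (interior versus exterior) is a reindexing of the paper's column-by-column analysis, which records the per-operator factors \eqref{eqn:factor1}--\eqref{eqn:factor2}, telescopes them, and arrives at the closed form \eqref{eqn:totalfactor1} expressed through the diagonal lengths $d_{j-i}(\lambda_l)$ and $d_{j-i\pm1}(\lambda_l)$. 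Your identification of $ht(\bar\lambda\setminus\bar\nu)$ with the number of occupied positions strictly between $a_1$ and $a_2$ is correct and is a clean way to see where the sign lives.

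The gap is that you stop exactly where the proposition's content begins: everything after ``should collapse to a pure monomial'' is deferred as ``the main computational task,'' but that task \emph{is} the proof. Concretely, one must (i) identify which boundary term survives each telescoped run and check that it cancels the matching interior factor $(1-\cdot)^{-1}$, leaving a monomial (the paper's \eqref{eqn:totalfactor1}, obtained from the alternation of \eqref{eqn:factor3} and \eqref{eqn:factor4}); and (ii) match the product of these monomials over all $l\neq k$, times the prefactor, against $(-1)^{ht(\bar\lambda\setminus\bar\nu)}\prod_{(r,s)}q_{s-r}^{r}$. Step (ii) is not a ``direct combinatorial verification'' in one stroke, because the row indices $r$ of the strip $\bar\lambda\setminus\bar\nu$ depend on the ambient diagonal lengths $d_{j-i}(\lambda_l)$ for every $l$; the paper handles this dependence by induction, verifying the base case $\overline{\lambda[k]}$ via \eqref{eqn:kleft} and then checking that adding one box to some $\mu_l$ changes both sides by the common factor \eqref{eqn:lastfactor}. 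Your direct matching would have to reprove exactly these facts. One further imprecision: the convergence claim that $v_{t'}=\fq_{a_1}/\fq_{u}$ has arbitrarily large positive $q$-order as $|t'|\to\infty$ is true only as $t'\to+\infty$; for $t'\to-\infty$ you must first rewrite the factor in its reciprocal form (the paper does this implicitly by noting which case of \eqref{eqn:factor1} holds for $|t|\gg0$), otherwise the tail does not tend to $1$.
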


We prove Proposition \ref{lem:commuteidentity} in the next three subsections.

\subsection{Combinatorial description of $\lim_{N\rightarrow\infty}\frac{F_\lambda(N)}{F_\nu(N)}$ }
We start by closely analyzing the expression $\lim_{N\rightarrow\infty}\frac{F_\lambda(N)}{F_\nu(N)}$. Since we removed a box in the $(i,j)$ position of $\lambda_k$, the discrepancy in the operator expressions \eqref{eqn:operatorexpectation} for $\frac{V^Y_{\emptyset,\emptyset,\lambda}(q,\bv)}{V^Y_{\emptyset,\emptyset,\nu}(q,\bv)} $ can be expressed as the quotient
\begin{equation}\label{eqn:quotient}
\frac{\Gamma_-(q_1\cdots q_kq^{j-i-1})\Gamma_+(q_1^{-1}\cdots q_k^{-1}q^{i-j})}{\Gamma_+(q_1^{-1}\cdots q_k^{-1}q^{i-j+1})\Gamma_-(q_1\cdots q_kq^{j-i})}
\end{equation}
In other words, the factor $\lim_{N\rightarrow\infty}\frac{F_\lambda(N)}{F_\nu(N)}$ comes entirely from commuting $\Gamma_\pm$ operators through both the numerator and denominator of \eqref{eqn:quotient}.

For $l<k$, we must commute the operators $\Gamma_{\lambda_l(t)}\left(\fq_{nt+l}^{-\lambda_l(t)}\right)$ through \eqref{eqn:quotient} from left to right for all $t>j-i$ (and halfway for $t=j-i$). For $t>j-i$, we compute from \eqref{eqn:commutation} that this commutation produces the factor
\begin{equation}\label{eqn:factor1}
\begin{cases}
\frac{1-q_{l+1}\cdots q_k q^{j-i-t}}{1-q_{l+1}\cdots q_k q^{j-i-t-1}} &\text{ if } \lambda_l(t)=+1\\
\frac{1-q_{l+1}^{-1}\cdots q_k^{-1}q^{-j+i+t}}{1-q_{l+1}^{-1}\cdots q_k^{-1}q^{-j+i+t+1}} &\text{ if } \lambda_l(t)=-1
\end{cases}
\end{equation}
and for $t=j-i$ it produces the factor
\begin{equation}\label{eqn:factor2}
\begin{cases}
\left(1-q_{l+1}\cdots q_k q^{-1}\right)^{-1} &\text{ if } \lambda_l(t)=+1\\ 
\left(1-q_{l+1}^{-1}\cdots q_k^{-1}q\right)^{-1} &\text{ if } \lambda_l(t)=-1
\end{cases}
\end{equation}

For $t\gg0$, we are always in the second case of \eqref{eqn:factor1} by definition of the slope function. In particular, for $N$ sufficiently large a quick analysis shows that for $t=N-1,N-2,N-3,\dots$ the successive commutation factors in \eqref{eqn:factor1} cancel except for an initial term of $\left(1-q_{l+1}^{-1}\cdots q_k^{-1}q^{N-j+i} \right)^{-1}$ which tends to $1$ as $N\rightarrow\infty$. As we decrease $t$, the cancellation continues to happen until we encounter a place where $\lambda_l(t+1)=-1$ and $\lambda_l(t)=+1$ with $t\geq j-i$. At this point, the successive terms cancel modulo a multiplicative factor of
\begin{equation}\label{eqn:factor3}
-q_{l+1}^{-1}\cdots q_k^{-1}q^{-j+i+t+1}
\end{equation}
Similarly, whenever we encounter a place where $\lambda_l(t+1)=+1$ and $\lambda_l(t)=-1$ with $t\geq j-i$, we obtain a factor of
\begin{equation}\label{eqn:factor4}
-q_{l+1}\cdots q_kq^{j-i-t-1}.
\end{equation}

Since the factors \eqref{eqn:factor3} and \eqref{eqn:factor4} alternate, they merely contribute factors of $q$ except for possibly the last occurence. From this, it is not hard to see that the overall factor appearing is
\begin{equation}\label{eqn:totalfactor1}
\begin{cases}
-q_{l+1}^{-1}\cdots q_k^{-1}q^a &\text{ if } \lambda_l(j-i)=+1\\
q^a & \text{ if } \lambda_l(j-i)=-1
\end{cases}
\end{equation}
where $a:=\#\{t:t>j-i\text{ and }\lambda_l(t)=+1\}$. In terms of the Young diagram $\lambda_l$, we can describe the occurrence of each of the two cases combinatorially. In what follows, we use $d_l(\tau)$ to denote the number of boxes in the $l$-th diagonal of $\tau$. From the above analysis (and similar analysis for $l>k$) we conclude the following lemma.
\begin{lemma} 
\[
\lim_{N\rightarrow\infty}\frac{F_\lambda(N)}{F_\nu(N)}=\prod_{l\neq k} F_{\lambda\setminus\nu}(l)
\]
where $F_{\lambda\setminus\nu}(l)$ is defined by the following combinatorial rules.

\begin{enumerate}[(A)]
	\item If $l<k$, and\\
	\begin{enumerate}[(I)]
		\item $j-i<0$, then $F_{\lambda\setminus\nu}(l)$ is equal to\\
		\begin{enumerate}[(a)]
			\item $q^a$ if $d_{j-i}(\lambda_l)=a+j-i$ and $d_{j-i+1}(\lambda_l)=a+j-i+1$, and
			\item $-q_{l+1}^{-1}\cdots q_k^{-1}q^a$ if $d_{j-i}(\lambda_l)=d_{j-i+1}(\lambda_l)=a+j-i$.\\
		\end{enumerate}
		\item $j-i\geq 0$,  then $F_{\lambda\setminus\nu}(l)$ is equal to\\
		\begin{enumerate}[(a)]
			\item $q^a$ if the $d_{j-i}(\lambda_l)=d_{j-i+1}(\lambda_l)=a$, and
			\item $-q_{l+1}^{-1}\cdots q_k^{-1}q^{a+1}$ if $d_{j-i}(\lambda_l)=a+1$ and $d_{j-i+1}(\lambda_l)=a$.\\
		\end{enumerate}
	\end{enumerate}
	\item[(B)] If $l>k$ and\\
	\begin{enumerate}[(I)]
		\item $j-i\leq 0$, then $F_{\lambda\setminus\nu}(l)$ is equal to\\
		\begin{enumerate}[(a)]
			\item $q^a$ if $d_{j-i-1}(\lambda_l)=d_{j-i}(\lambda_l)=a$, and
			\item $-q_{k+1}^{-1}\cdots q_l^{-1}q^{a+1}$ if $d_{j-i-1}(\lambda_l)=a$ and $d_{j-i}(\lambda_l)=a+1$.\\
		\end{enumerate}
		\item $j-i>0$, then $F_{\lambda\setminus\nu}(l)$ is equal to\\
		\begin{enumerate}[(a)]
			\item $q^a$ if the $d_{j-i-1}(\lambda_l)=a-j+i+1$ and $d_{j-i}(\lambda_l)=a-j+i$, and
			\item $-q_{k+1}^{-1}\cdots q_l^{-1}q^a$ if $d_{j-i-1}(\lambda_l)=d_{j-i}(\lambda_l)=a-j+i$.\\
		\end{enumerate}
	\end{enumerate}
\end{enumerate}
\end{lemma}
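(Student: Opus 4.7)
The plan is to build on the setup already laid out in the excerpt. Before interlacing, $V^Y_{\rho^+,\rho^-,\lambda}(\bq)$ and $V^Y_{\rho^+,\rho^-,\nu}(\bq)$ agree on every segment-$l$ operator for $l \neq k$ and differ only through the quotient \eqref{eqn:quotient}, which substitutes two operators in the $k$-th segment. Consequently, the ratio $F_\lambda(N)/F_\nu(N)$ accumulates entirely from the additional commutations needed to move those two operators past each segment-$l$ operator during interlacing, and the total factor decomposes as $\prod_{l\neq k} F_{\lambda\setminus\nu}(l)$, matching the product form in the lemma.

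To compute $F_{\lambda\setminus\nu}(l)$ for a fixed $l$, I would apply the commutation identity \eqref{eqn:commutation} directly, recording the factor picked up when each segment-$l$ operator $\Gamma_{\lambda_l(t)}(\fq_{nt+l}^{-\lambda_l(t)})$ is commuted past either both (for $t > j-i$) or one (for $t = j-i$) of the operators in \eqref{eqn:quotient}. This reproduces the local factors \eqref{eqn:factor1} and \eqref{eqn:factor2}. Setting $V(t) := 1 - q_{l+1}\cdots q_k\, q^{j-i-t}$ and $U(t) := 1 - q_{l+1}^{-1}\cdots q_k^{-1}\, q^{t-j+i}$, each local factor reads $V(t)/V(t+1)$ when $\lambda_l(t)=+1$ and $U(t)/U(t+1)$ when $\lambda_l(t)=-1$. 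Since $\lambda_l$ is a finite partition, its slope sequence stabilizes at both tails, so the infinite product telescopes to a finite product over those diagonals $s \geq j-i+1$ at which the slope changes sign, i.e., over inner and outer corners of $\lambda_l$ lying strictly past diagonal $j-i$. The identity $V(s)/U(s) = -q_{l+1}\cdots q_k\, q^{j-i-s}$ then collapses this to a closed-form monomial for $F_{\lambda\setminus\nu}(l)$ that is consistent with \eqref{eqn:totalfactor1}.

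The final step is to rewrite this closed form in terms of $d_{j-i}(\lambda_l)$ and $d_{j-i+1}(\lambda_l)$ (with the analogous shifts when $l>k$). By definition, $\lambda_l(j-i)=+1$ precisely when $j-i \in S(\lambda_l)$, i.e., when there exists $i' \geq 0$ with $(\lambda_l)_{i'} = i'+j-i+1$; a short combinatorial calculation then shows that this is equivalent to $d_{j-i}(\lambda_l) = d_{j-i+1}(\lambda_l)$ (appropriately shifted when $j-i<0$), while $\lambda_l(j-i)=-1$ corresponds to $d_{j-i+1}(\lambda_l) - d_{j-i}(\lambda_l) = 1$ (again with the appropriate shift). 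These are exactly cases (b) and (a) of the lemma, respectively, and the parameter $a$ is pinned down by matching the $q$-exponent of the telescoped closed form against the diagonal counts. The split (I)/(II) arises because the definition of $d_c(\lambda_l)$ restricts the row index range to $i' \geq \max(0,-c)$, so rows with $0 \leq i' < -c$ contribute "virtually" to the slope sequence but not to $d_c$; this produces the shift by $j-i$ that distinguishes case (I) from case (II). Case (B) ($l>k$) is handled by a symmetric argument in which the roles of the two operators in \eqref{eqn:quotient} are swapped, so the complementary product $q_{k+1}^{-1}\cdots q_l^{-1}$ appears and the diagonal shifts flip to involve $d_{j-i-1}$.

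The main obstacle is the final translation step. Although the telescoping becomes clean once the $V/U$ factorization is in place, converting the telescoped corner product into the stated diagonal-count parameterization requires a careful case analysis — both in the sign of $j-i$ and in tracking the edge contribution from the $t=j-i$ half-commutation in \eqref{eqn:factor2}. Verifying that all four sub-cases emerge from the single uniform telescoped formula, with the correct $q$-exponent, sign, and prefactor involving the $q_{\bullet}^{-1}$ product, is where most of the bookkeeping effort lies.
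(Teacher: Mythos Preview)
Your proposal is correct and follows essentially the same approach as the paper: both isolate the discrepancy as the quotient \eqref{eqn:quotient}, compute the per-$t$ commutation factors \eqref{eqn:factor1}--\eqref{eqn:factor2}, telescope over sign changes of the slope sequence to reach \eqref{eqn:totalfactor1} with $a=\#\{t>j-i:\lambda_l(t)=+1\}$, and then translate the slope condition $\lambda_l(j-i)=\pm1$ into the diagonal-length dichotomies. Your $V(t)/U(t)$ packaging makes the telescoping slightly more explicit than the paper's narrative description, but the argument is the same.
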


Notice that the factor only ever depends on the lengths of the $(j-i)$th and $(j-i+1)$th diagonals if $l<k$ and the lenghts of the $(j-i)$th and $(j-i-1)$th diagonals if $l>k$. It is left to compare these factors with the other terms appearing in Proposition \ref{lem:commuteidentity}.

We proceed towards the proof of Proposition \ref{lem:commuteidentity} inductively in the next two subsections.

\subsection{Base case}

Let $\lambda[k]$ be the $n$-tuple of partitions obtained from $\lambda$ by setting $\lambda_l=\emptyset$ for $l\neq k$. Then $\overline{\lambda[k]}$ is a colored Young diagram which can be constructed entirely out of $n$-strips where the northeastern-most box of each strip has color $k$. 

\begin{example}\label{ex:kyoung}
Suppose $n=4$ and $\lambda_1=(3,3)$, then $\lambda_1$ and $\overline{\lambda[1]}$ correspond as follows:
\[
\lambda_1=\ytableaushort{
abc,def
}
\hspace{1cm}
\overline{\lambda[1]}=\ytableaushort{
{0_a}{1_a}{2_b}{3_b}{0_b}{1_b}{2_c}{3_c}{0_c}{1_c},{3_a}{0_e}{1_e}{2_f}{3_f}{0_f}{1_f},{2_a}{3_e},{1_d}{2_e},{0_d},{3_d},{2_d}
}
\]
where the lettering indicates which $4$-strips correspond to each box in $\lambda_1$.
\end{example}

We now show that Proposition \ref{lem:commuteidentity} holds for $\lambda[k]$. It is not hard to see (c.f. Example \ref{ex:kyoung}) that the right-hand side of Proposition \ref{lem:commuteidentity} in the case of removing the $(i,j)$ box from $\lambda[k]$ is equal to
\begin{equation}\label{eqn:kleft}
\begin{cases}
(-1)^{n-1}q^{n(i-j)-k+j}q_{k-1}q_{k-2}^2\cdots q_{k+1}^{n-1} &\text{ if } j-i<0\\
(-1)^{n-k-1}q^iq_{n-1}q_{n-2}^{2}\cdots q_{k+1}^{n-k-1}&\text{ if } j-i=0\\
q^i &\text{ if } j-i>0
\end{cases}
\end{equation}

It is also straightforward to compute the left-hand side of Proposition \ref{lem:commuteidentity} in the case of removing the $(i,j)$ box from $\lambda[k]$. For example, if we take $j-i<0$, then from the combinatorial rules (A) and (B) for $\lim_{N\rightarrow\infty}\frac{F_\lambda(N)}{F_\nu(N)}$ we have factors
\[
\begin{cases}
-q_{l+1}^{-1}\cdots q_k^{-1}q^{i-j} & \text{ if } l<k\\
1 &\text{ if } l>k
\end{cases}
\]
Multiplying these factors together and combining with the remaining terms in the left side of Proposition \ref{lem:commuteidentity}, we obtain exactly the first case of \eqref{eqn:kleft}. The other two cases are similar. This proves that Proposition \ref{lem:commuteidentity} holds for $\lambda[k]$.

\subsection{Inductive step}

To finish the proof of Proposition \ref{lem:commuteidentity}, we analyze what happens when we build $\bar\lambda$ from $\overline{\lambda[k]}$. By induction, suppose Proposition \ref{lem:commuteidentity} holds for $\bar\mu$ where $\bar\mu$ is a partial reconstruction of $\bar\lambda$, ie. $\mu_l\subset\lambda_l$ for $l\neq k$ and $\mu_k=\lambda_k$. We must show that the proposition continues to hold if we add one more box to $\lambda_l\setminus\mu_l$ for some $l$.

For simplicity, assume henceforth that $j-i<0$ and $l<k$, all other cases are similar. The only way that adding a box to $\mu_l$ affects the border strip corresponding to the $(i,j)$ box of $\lambda_k$ is if the additional box is in the $(j-i)$th or $(j-i+1)$th diagonal of $\mu_l$ (c.f. Example \ref{ex:addbox}). If the new box is in the $(j-i)$th diagonal of $\mu_l$, then the boxes of color $k+1,\dots,n-1,0,\dots,l$ in the strip corresponding to the $(i,j)$ box of $\lambda_k$ get shifted out. If the box is in the $(j-i+1)$th diagonal $\mu_l$, then the boxes of color $l+1,\dots,k$ get shifted out. We exhibit this behavior in the next example.

\begin{example}
Suppose $n=5$, $\mu_0=(1)$, $\mu_1=(1)$, $\mu_2=(2,1)$, $\mu_3=\emptyset$, $\mu_4=\emptyset$, $k=2$, and $(i,j)=(1,0)$. Then $\bar\mu$ is the colored Young diagram
 \[
\begin{ytableau}
0 & 1 & 2 & 3 & 4 & 0 & 1 & 2\\
4 & 0 & 1\\
3 & 4 & 0\\
2 & 3 & 4\\
1 & 2 & 3\\
*(lgray) 0 & *(lgray) 1 & *(lgray) 2\\
*(lgray) 4\\
*(lgray) 3
\end{ytableau}
\]
where we have shaded the special strip corresponding to the $(i,j)$ box of $\lambda_k$. Now if we add the $(1,0)$ box to the $-1$ diagonal of $\mu_1$ it has the following effect
 \[
\begin{ytableau}
0 & 1 & 2 & 3 & 4 & 0 & 1 & 2\\
4 & 0 & 1\\
3 & 4 & 0\\
2 & 3 & 4\\
\circled{1} & 2 & 3\\
\circled{0} & *(lgray) 1 & *(lgray) 2\\
\circled{4} & *(lgray) 0\\
\circled{3} & *(lgray) 4\\
\circled{2} & *(lgray) 3
\end{ytableau}
\]
where the boxes with circles are the strip corresponding to the new box in $\mu_1$ while the shaded boxes are the strip corresponding to the $(i,j)$ box of $\lambda_k$. We see that adding the box had the effect of shifting the color $3,4,0$ boxes along their diagonals. 
\end{example}

So what effect does this shift have on Proposition \ref{lem:commuteidentity}? On the right side of Proposition \ref{lem:commuteidentity} it is not hard to see that the shift results in a factor
\begin{equation}\label{eqn:lastfactor}
\begin{cases} 
-q_{k+1}\cdots q_{n-1}q_0\cdots q_l &\text{ if } d_{j-i}(\mu_l) \text{ increases}\\
-q_{l+1}\cdots q_k & \text{ if } d_{j-i+1}(\mu_l) \text{ increases}
\end{cases}
\end{equation}
The sign comes from the fact that the number of rows occupied by the strip is changing by exactly one. Notice that the only thing that changes on the left side is the factor $\lim_{N\rightarrow\infty}\frac{F_\lambda(N)}{F_\nu(N)}$. By the combinatorial description of this factor which we derived above, we see that if $d_{j-i}(\mu_l)$ increases then we must be passing from (A.I.b) to (A.I.a) (with an increase by one of the value $a$). But the discrepancy in these factors is $-q_{k+1}\cdots q_{n-1}q_0\cdots q_l$, agreeing with \eqref{eqn:lastfactor}. Similarly, if $d_{j-1+i}(\mu_l)$ increases then we must be passing from (A.I.a) to (A.I.b) (with the same value $a$). This results in a factor of $-q_{l+1}\cdots q_k$, agreeing again with \eqref{eqn:lastfactor}. 

A similiar check of the other cases finishes the proof of Proposition \ref{lem:commuteidentity} and, hence, Theorem \ref{thm:vertexcrc}. \vspace{-.55cm}\flushright $\square$

\section{Compatibility with gluing}\label{sec:globalcrc}

Having proved the vertex correspondence, we now turn to the task of proving that it is compatible with the edge terms in the gluing formula. We use the notation from Section \ref{sec:dt}.

\begin{proof}[Proof of Theorem \ref{thm:globalcrc}]

The edge terms in Theorem \ref{thm:gluing} naturally fall into two cases depending on whether an edge $e$ in the web diagram of $\cZ$ has $n_e=1$ or $n_e>1$. 

The case $n_e=1$ is relatively easy. Given such an oriented edge $e$ and inducing the same orientation and labeling of vertices for $f_e$, it is not hard to see that $m_{f_e}=m_e$,  $m_{f_e}'=m_e'$, and all $\delta$s agree. Therefore, for any partition $\rho$, the edge terms in Theorem \ref{thm:gluing} coincide and the contributions from such edges satisfy the statement of Theorem \ref{thm:globalcrc}.

The case $n_e>1$ is a bit more subtle. Fix an orientation on $e$ which induces an orientation on $f_k:=f_{e,k}$ (we drop the $e$ subscripts from this point on). Then it is not hard to compute that 
\[
m_{f_k}=nm+2(n-k-1)
\] 
and 
\[
m_{f_k}'=-nm-2(n-k)
\] 
We have implicitly chosen a labeling such that the $f_k$ are the third edge at each of their vertices, hence all of the $\delta$s are zero. Assume we have an edge assignment where the partition associated to each $f_k$ is denoted $\lambda_k$. Then locally at $e$ the contribution to the right side of Theorem \ref{thm:globalcrc} is given by
\begin{equation}\label{eqn:glueres}
\frac{P^Y_{\rho_0^+,\rho_0^-,(\lambda_0,\dots,\lambda_{n-1})}(q,\mathbf{u}_g)}{P^Y_{\emptyset,\emptyset,\emptyset}(q,\mathbf{u}_g)}E_{\lambda}^{W,e}\frac{P^Y_{\rho_\infty^-,\rho_\infty^+,(\lambda_{n-1}',\dots,\lambda_0')}(q,\overline{\mathbf{u}_h})}{P^Y_{\emptyset,\emptyset,\emptyset}(q,\overline{\mathbf{u}_h})}
\end{equation}
where
\[
E_\lambda^{W,e}=\prod_{(i,j)\in\lambda_k} u_{f_k}(-1)^{nm}q^{(nm+2(n-k))(i-j)+2j+1}
\]
and where $\mathbf{u}_g=(u_{g_1},\dots,u_{g_{n-1}})$ while $\overline{\mathbf{u}_h}=(u_{h_{n-1}},\dots,u_{h_1})$. After the change of variables, the edge factor becomes
\begin{equation}\label{eqn:edgefactor}
E_\lambda^{W,e}=\prod_{(i,j)\in\lambda_k}v\left(\prod_{l=1}^kq_l^{l-(m+1)l}\prod_{l=k+1}^{n-1}q_l^{(m+2)(n-l)}\right)(-1)^{nm}q^{(nm+2(n-k))(i-j)+2j+1}
\end{equation}
Applying Theorem \ref{thm:vertexcrc}, we have (after the change of variables)
\begin{equation}\label{eqn:vertex1}
\frac{ P^Y_{\rho_0^+,\rho_0^-,(\lambda_0,\dots,\lambda_{n-1})}(q,\mathbf{u}_g)}{ P^Y_{\emptyset,\emptyset,\emptyset}(q,\mathbf{u}_g)}=\tilde P^{\cX}_{\rho_0^+,\rho_0^-,\bar\lambda}(\bq)\prod_{(i,j)\in\lambda_k}\left((-1)^{n-k-1}q^{(n-k)(j-i)-j}\prod_{l > k}q_l^{l-n} \right)
\end{equation}
and
\begin{equation}\label{eqn:vertex2}
\frac{ P^Y_{\rho_\infty^-,\rho_\infty^+,(\lambda_{n-1}',\dots,\lambda_0')}(q,\overline{\mathbf{u}_h})}{ P^Y_{\emptyset,\emptyset,\emptyset}(q,\overline{\mathbf{u}_h})}=\tilde P^{\cX}_{\rho_\infty^-,\rho_\infty^+,\bar\lambda'}(\overline{\bq})\prod_{(i,j)\in\lambda_k}\left((-1)^{k}q^{k(i-j)-j}\prod_{l\leq k}q_l^{-l} \right)
\end{equation}
Combining terms in \eqref{eqn:edgefactor}, \eqref{eqn:vertex1}, and \eqref{eqn:vertex2}, \eqref{eqn:glueres} becomes
\begin{equation}\label{eqn:glueorb}
\tilde P^{\cX}_{\rho_0^+,\rho_0^-,(\lambda_0,\dots,\lambda_{n-1})}(\bq)\tilde{E}_\lambda \tilde P^{\cX}_{\rho_\infty^-,\rho_\infty^+,(\lambda_{n-1}',\dots,\lambda_0')}(\overline{\bq})
\end{equation}
where
\[
\tilde{E}_\lambda=\prod_{(i,j)\in\lambda_k}v\left(\prod_{l=1}^kq_l^{(m+1)l}\prod_{l=k+1}^{n-1}q_l^{(m+1)(n-l)}\right)(-1)^{n(m+1)+1}q^{n(m+1)(i-j)+1}
\]
From the observations made in Section \ref{sec:quotients}, the following identity is not hard to prove.
\[
\prod_{(i,j)\in\lambda_k}\left(\prod_{l=1}^kq_l^{(m+1)l}\prod_{l=k+1}^{n-1}q_l^{(m+1)(n-l)}\right)q^{n(m+1)(i-j)}=\prod_{(i,j)\in\bar\lambda}q_{j-i}^{(m+1)(i-j)}
\]
Therefore, \eqref{eqn:glueorb} becomes
\[
 P^{\cX}_{\rho_0^+,\rho_0^-,(\lambda_0,\dots,\lambda_{n-1})}(\bq) E_\lambda^{\cZ,e}  P^{\cX}_{\rho_\infty^-,\rho_\infty^+,(\lambda_{n-1}',\dots,\lambda_0')}(\overline{\bq})
\]
where
\[
E_\lambda^{\cZ,e}=v^{|\lambda|}(-1)^{m|\bar\lambda|}\prod_{(i,j)\in\bar\lambda}q_{j-i}^{-mj+(m+2)i+1}
\]
is the edge term from the gluing algorithm in Theorem \ref{thm:gluing}. This concludes the proof of Theorem \ref{thm:globalcrc}.
\end{proof}

\bibliographystyle{alpha}
\bibliography{biblio}

\end{document}